\documentclass{amsart}

\usepackage{amsmath,amsfonts,amsthm,tikz}
\usetikzlibrary{arrows}
\usepackage[margin=1in]{geometry}

\DeclareMathOperator{\Assh}{Assh}
\DeclareMathOperator{\Ht}{ht}
\newcommand{\lengthR}[2]{\lambda_{#1}\left(#2\right)}
\newcommand{\m}{\mathfrak{m}}
\DeclareMathOperator{\Min}{Min}
\newcommand{\N}{\mathbb{N}}
\newcommand{\scl}[2]{{#2}^{\mathrm{cl}_{#1}}}
\newcommand{\p}{\mathfrak{p}}
\newcommand{\Q}{\mathbb{Q}}

\newcommand{\wsc}[2]{{#2}^{\left\{#1\right\}}}
\newcommand{\Z}{\mathbb{Z}}

\newtheorem{theorem}{Theorem}[section]
\newtheorem*{theorem*}{Theorem}
\newtheorem*{lemma*}{Lemma}
\newtheorem{lemma}[theorem]{Lemma}
\newtheorem{corollary}[theorem]{Corollary}
\theoremstyle{definition}
\newtheorem{definition}[theorem]{Definition}
\newtheorem{example}[theorem]{Example}

\begin{document}
\title{Fundamental Results on $s$-Closures}
\author{William D.\ Taylor}
\address{Tennessee State University, Nashville, Tennessee}
\email{wtaylo17@tnstate.edu}

\begin{abstract}  This paper establishes the fundamental properties of the $s$-closures, a recently introduced family of closure operations on ideals of rings of positive characteristic.  The behavior of the $s$-closure of homogeneous ideals in graded rings is studied, and criteria are given for when the $s$-closure of an ideal can be described exactly in terms of its tight closure and rational powers.  Sufficient conditions are established for the weak $s$-closure to equal to the $s$-closure.  A generalization of the Brian\c con-Skoda theorem is given which compares any two different $s$-closures applied to powers of the same ideal.
\end{abstract}

\maketitle

\section{Introduction}

In \cite{Taylor-Interpolating}, the author introduced a family of closure operations on the ideals of noetherian rings of positive characteristic which lie between and interpolate between integral closure and tight closure of those ideals.  For a real number $s\geq 1$, the \emph{weak $s$-closure} of an ideal $I$ in a ring $R$ is the set of $x\in R$ such that there exists $c\in R$, not in any minimal prime, such that $cx^q\in I^{\lceil sq\rceil}+I^{[q]}$ for all sufficiently high powers $q$ of the characteristic of $R$.  We denote the weak $s$-closure of $I$ by $\wsc{s}{I}$.  The \emph{$s$-closure} of $I$ is the ideal obtained by applying the weak $s$-closure repeatedly until the ideal stabilizes.

The $s$-closures are related to the $s$-multiplicity function, which similarly interpolates between the Hilbert-Samuel and Hilbert-Kunz multiplicities of an ideal.  The $s$-multiplicity of an $\m$-primary ideal $I$ in a local ring $(R,\m)$ is
\[e_s(I)= \lim_{q\to\infty}\frac{\lambda\left(R/(I^{\lceil sq\rceil}+I^{[q]})\right)}{q^d\mathcal{H}_s(d)},\]
where $\mathcal{H}_s(d)$ is a normalizing factor depending only on $s$ and the Krull dimension $d$ of $R$.

 The strongest result on the subject of $s$-closures in \cite{Taylor-Interpolating} is Theorem 4.6, which states that if $I$ and $J$ are $\m$-primary ideals of a ring $R$ and $\wsc{s}{I}=\wsc{s}{J}$, then $e_s(I)=e_s(J)$.  The same theorem gives a partial converse: if $R$ is an $F$-finite complete domain, and $I\subseteq J$, and $e_s(I)=e_s(J)$, then $\wsc{s}{I}=\wsc{s}{J}$.  Furthermore, in this case the weak $s$-closure is the $s$-closure, i.e.\  $\wsc{s}{I}=\scl{s}{I}$.  In this paper, we show in Theorem~\ref{theorem - MP to SM} that the domain hypothesis for the converse direction may be weakened to an unmixed hypothesis.

 This paper's purpose is to develop significantly more of the theory of $s$-closures, particularly to establish the results that will be essential to further study. The three main goals of this paper are to understand the structure of the $s$-closure in the graded case, identify situations in which $\wsc{s}{I}=\scl{s}{I}$, and to compare the $s$-closures for different values of $s$ using a generalization of the Brian\c con-Skoda theorem.

 Here we record those results in the paper we believe will be most relevant to future work.  In some cases the statement of the full theorem is slightly stronger but more technical.

\begin{lemma*}[\ref{lem - t-closure is s-closed}] If $R$ is a ring of characteristic $p>0$, $I\subseteq R$ is an ideal, and $1\leq t<s$, then $\wsc{s}{\left(\wsc{t}{I}\right)}=\wsc{t}{I}$.
\end{lemma*}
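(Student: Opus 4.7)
The plan is to establish the nontrivial inclusion $\wsc{s}{(\wsc{t}{I})} \subseteq \wsc{t}{I}$; the reverse follows from extensivity of the weak $s$-closure. Write $J = \wsc{t}{I}$, take $x \in \wsc{s}{J}$, and pick $c$ outside every minimal prime with $cx^q \in J^{\lceil sq\rceil}+J^{[q]}$ for $q \gg 0$. The goal is to produce a single $c'$ outside every minimal prime, independent of $q$, with $c'x^q \in I^{\lceil tq\rceil}+I^{[q]}$ for $q \gg 0$.

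The Frobenius-power piece is easy. Choose generators $y_1,\dots,y_n$ of $J$ together with multipliers $C_i$ witnessing $y_i \in \wsc{t}{I}$, and set $C = \prod_i C_i$; since $J^{[q]}=(y_1^q,\dots,y_n^q)$, one gets $CJ^{[q]} \subseteq I^{\lceil tq\rceil}+I^{[q]}$ for $q \gg 0$, and $C$ is outside every minimal prime. The work is in the ordinary-power term $J^{\lceil sq\rceil}$. For this, I would first use the elementary inclusion $\wsc{t}{I} \subseteq \overline{I}$, which follows at once from $I^{\lceil tq\rceil}+I^{[q]} \subseteq I^q$ when $t \geq 1$, to deduce $J^{\lceil sq\rceil} \subseteq \overline{I}^{\lceil sq\rceil} \subseteq \overline{I^{\lceil sq\rceil}}$, and then invoke the Hochster--Huneke form of the Brian\c con--Skoda theorem, $\overline{I^m} \subseteq (I^{m-\ell+1})^*$ for $m \geq \ell$ with $\ell$ bounding the number of generators of $I$. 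Because $s > t$ strictly, $\lceil sq\rceil - \ell + 1 \geq \lceil tq\rceil$ once $q$ is large, so $J^{\lceil sq\rceil} \subseteq (I^{\lceil tq\rceil})^*$; this is the only step in which the strict inequality $t < s$ is actually used. A test element $c^*$ of $R$ then yields $c^* J^{\lceil sq\rceil} \subseteq I^{\lceil tq\rceil}$.

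Combining the two reductions, write $cx^q = a + b$ with $a \in J^{\lceil sq\rceil}$ and $b \in J^{[q]}$. Then $c^*Cc \cdot x^q = c^*Ca + c^*Cb \in I^{\lceil tq\rceil}+I^{[q]}$ for $q \gg 0$, so $c'=c^*Cc$ is the desired uniform witness and $x \in \wsc{t}{I}$. The main obstacle is securing the test element step: the argument wants a single $c^*$, outside every minimal prime, with $c^*(I')^* \subseteq I'$ for every ideal $I'$ of $R$, and such an element requires mild finiteness conditions on $R$ (for instance $R$ being $F$-finite reduced, or excellent reduced). If the paper aims for broader generality, the back-up would be a uniform Brian\c con--Skoda estimate $c_0 \overline{I^n} \subseteq I^{n-k}$ via Huneke's uniform Artin--Rees, or a direct combinatorial expansion of $\prod_\ell(Cy_{i_\ell}^q) = \prod_\ell(f_\ell+g_\ell)$ with careful control of every mixed term in $(I^{\lceil tq\rceil}+I^{[q]})^{\lceil sq\rceil}$---exactly the accounting that makes the naive direct attack delicate.
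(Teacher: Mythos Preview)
Your approach via Brian\c con--Skoda and test elements is sound under the extra hypothesis that $R$ possesses a test element (e.g.\ $R$ reduced and $F$-finite or excellent), and you correctly flag this as the weak point. However, the lemma is stated for an arbitrary noetherian ring of characteristic $p$, so as written your argument does not establish the full result; your suggested fallback via uniform Artin--Rees would likewise impose extra hypotheses on $R$.

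The paper avoids test elements entirely by a direct computation that is cleaner than the ``combinatorial expansion'' you anticipated. The key device is the elementary containment of Lemma~\ref{lem - basic interaction},
\[
J^{\lceil h\rceil}\subseteq \bigl(J^{[q]}\bigr)^{\lceil h/q-\mu(J)\rceil},
\]
applied with $h=sqq'$ for a \emph{fixed} power $q'$ chosen once and for all so that $q'(s-t)\geq \mu(J)$. Writing $d$ for your $C$ (so $dJ^{[q]}\subseteq I^{(t,q)}$ for $q\gg0$) and $N=\lceil sq'-\mu(J)\rceil$, one has for $q\gg0$
\[
d^{N}J^{\lceil sqq'\rceil}\subseteq \bigl(dJ^{[q]}\bigr)^{N}\subseteq \bigl(I^{(t,q)}\bigr)^{N}\subseteq (I^{q})^{N}\subseteq I^{\lceil tqq'\rceil},
\]
the last step because $qN\geq q(sq'-\mu(J))\geq tqq'$ by the choice of $q'$. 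Since also $d^{N}J^{[qq']}\subseteq dJ^{[qq']}\subseteq I^{(t,qq')}$ (as $N\geq 1$), one concludes $cd^{N}x^{qq'}\in I^{(t,qq')}$ for all $q\gg0$, whence $x\in\wsc{t}{I}$. The point is that fixing $q'$ keeps the exponent $N$ on $d$ bounded independently of $q$, so no test element or uniform Brian\c con--Skoda estimate is needed; the argument uses nothing beyond the definitions and Lemma~\ref{lem - basic interaction}.
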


\begin{theorem*}[\ref{thm - all ideals have MP}] Let $R$ be a ring of characteristic $p>0$, $I\subseteq R$ an ideal, and $s\geq1$.  For any $x\in R$, $x\in \wsc{s}{I}$ if and only if $\overline{x}\in \wsc{s}{(IR/\p)}$ for all $\p\in \Min R$. 
\end{theorem*}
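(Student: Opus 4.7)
The plan is to prove the biconditional, with the forward direction being essentially formal and all of the content lying in the converse. For the forward direction, given a witness $c\in R$ not in any minimal prime of $R$ satisfying $cx^q\in I^{\lceil sq\rceil}+I^{[q]}$ for $q\gg 0$, I would observe that for each $\p\in\Min R$ the image $\overline{c}$ is nonzero in the domain $R/\p$, hence avoids its unique minimal prime $(0)$, and the defining containment descends to $R/\p$.

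For the converse, enumerate $\Min R=\{\p_1,\dots,\p_n\}$ and for each $i$ pick $c_i\notin\p_i$ witnessing $\overline{x}\in\wsc{s}{(IR/\p_i)}$, so $c_ix^q\in I^{\lceil sq\rceil}+I^{[q]}+\p_i$ for $q\gg 0$. Using the incomparability of distinct minimal primes, I pick $d_i\in\bigcap_{j\neq i}\p_j\setminus\p_i$ by multiplying together elements drawn one from each $\p_j\setminus\p_i$. Setting $c=\sum_i c_id_i$, modding out by $\p_k$ kills every summand except the $k$th, and $c_kd_k\notin\p_k$ by primeness, so $c$ avoids every minimal prime. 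A short calculation then gives $cx^q\in I^{\lceil sq\rceil}+I^{[q]}+N$ for $q\gg 0$, where $N$ is the nilradical: writing $c_ix^q=y_{i,q}+\pi_{i,q}$ with $y_{i,q}\in I^{\lceil sq\rceil}+I^{[q]}$ and $\pi_{i,q}\in\p_i$, each product $d_i\pi_{i,q}$ lies in every $\p_j$ and hence in $N$.

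The main obstacle is absorbing this nilradical contribution, which I would handle via Frobenius. Since $R$ is noetherian, $N^m=0$ for some $m$, and I fix a $p$-power $q_0\geq m$. Writing $cx^q=y_q+n_q$ with $y_q\in I^{\lceil sq\rceil}+I^{[q]}$ and $n_q\in N$, the characteristic-$p$ identity $(a+b)^{q_0}=a^{q_0}+b^{q_0}$ gives $(cx^q)^{q_0}=y_q^{q_0}+n_q^{q_0}=y_q^{q_0}$; decomposing $y_q=a+b$ with $a\in I^{\lceil sq\rceil}$ and $b\in I^{[q]}$ and applying the same identity yields $y_q^{q_0}=a^{q_0}+b^{q_0}\in I^{q_0\lceil sq\rceil}+I^{[qq_0]}$. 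Since $q_0\lceil sq\rceil$ is an integer that is at least $sqq_0$, it is at least $\lceil sqq_0\rceil$. Setting $q'=qq_0$, and noting that as $q$ ranges over sufficiently large $p$-powers so does $q'$, I conclude $c^{q_0}x^{q'}\in I^{\lceil sq'\rceil}+I^{[q']}$ for $q'\gg 0$; since $c^{q_0}$ still avoids every minimal prime by primality, $x\in\wsc{s}{I}$.
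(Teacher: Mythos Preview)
Your proof is correct and follows essentially the same strategy as the paper: build a global witness $c=\sum_i c_id_i$ via CRT-style elements $d_i$, then use a fixed Frobenius power to annihilate the nilpotent error. The only cosmetic differences are that the paper routes the nilpotent term through the product $\prod_j\p_j$ (and the vanishing of its Frobenius power) rather than the nilradical $N$ directly, and inserts a prime-avoidance adjustment to force each $c_i\in R^\circ$---a step your argument shows is unnecessary, since $c\notin\p_k$ already follows from $c_kd_k\notin\p_k$ and $d_j\in\p_k$ for $j\neq k$.
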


 \begin{theorem*}[\ref{thm - rational powers to sclosure}, \ref{thm - sclosure is mixed power criterion}]
Let $R$ be a ring of characteristic $p>0$, $I\subseteq R$ an ideal, and $s\geq 1$ a rational number.  We have that $I^*+I_s\subseteq \wsc{s}{I}$, where $I_s$ is the $s$th rational power of $I$.  Furthermore, equality holds if $I$ is a monomial ideal in a polynomial or semigroup ring over a field.
 \end{theorem*}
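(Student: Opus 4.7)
The containment $I^*\subseteq \wsc{s}{I}$ is immediate: a tight-closure witness $c$ for $x\in I^*$ satisfies $cx^q\in I^{[q]}\subseteq I^{\lceil sq\rceil}+I^{[q]}$, giving $x\in \wsc{s}{I}$ with the same witness.

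For $I_s\subseteq \wsc{s}{I}$, I would rely on the valuative characterization of rational powers: $x\in I_s$ if and only if $v(x)\geq s\cdot v(I)$ for every Rees valuation $v$ of $I$. This yields $v(x^q)\geq sq\cdot v(I)$, and multiplying $x^q$ by a fixed element $d$ of sufficiently large value at each of the finitely many Rees valuations gives $v(dx^q)\geq \lceil sq\rceil v(I)$ uniformly in $q$, i.e., $dx^q\in \overline{I^{\lceil sq\rceil}}$ for all $q$. Composing with a standard uniform multiplier $c_0\in R^{\circ}$ satisfying $c_0\overline{I^m}\subseteq I^m$ for all $m$ produces a single witness $c=c_0 d$ with $cx^q\in I^{\lceil sq\rceil}\subseteq I^{\lceil sq\rceil}+I^{[q]}$, so $x\in\wsc{s}{I}$.

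For the monomial equality, my plan is to reduce to exponent geometry. In the polynomial or semigroup setting, each of $I^{\lceil sq\rceil}$, $I^{[q]}$, $I^*$, $I_s$, and $\wsc{s}{I}$ is monomial (monomiality of $\wsc{s}{I}$ should be established first by a torus-action / grading argument). Given $x^\alpha\in\wsc{s}{I}$, there is a monomial witness $x^\gamma$ such that $x^{\gamma+q\alpha}$ lies in the monomial ideal $I^{\lceil sq\rceil}+I^{[q]}$ for all large $q$; since monomial membership in a sum of monomial ideals forces membership in one of the summands, the exponent $\gamma+q\alpha$ lies in the exponent set of $I^{\lceil sq\rceil}$ or of $I^{[q]}$. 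By pigeonhole, one alternative holds for infinitely many $q$. Dividing by $q$ and taking the limit, the $I^{\lceil sq\rceil}$ alternative places $\alpha$ in the $s$-dilation of the Newton polyhedron of $I$ (giving $x^\alpha\in I_s$), while the $I^{[q]}$ alternative places $\alpha$ in the polyhedral region describing $I^*$ for monomial ideals.

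The main obstacle I anticipate is this last identification: matching the asymptotic polyhedron arising from the $I^{[q]}$-alternative with the known combinatorial description of $I^*$. In the polynomial-ring case this is easy, since regularity forces $I^*=I$ and the $I^{[q]}$-alternative confines $\alpha$ to $\mathrm{Exp}(I)$. In the semigroup case one must invoke the polyhedral description of tight closure for monomial ideals in affine toric rings and verify that it coincides exactly with the limiting region coming from the $I^{[q]}$-alternative, which is where the most careful bookkeeping will be required.
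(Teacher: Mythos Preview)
Your proposal is correct in outline, but both halves take a heavier route than the paper does, and the obstacle you flag at the end is one the paper simply sidesteps.

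For the containment $I_s\subseteq \wsc{s}{I}$, you go through Rees valuations and then invoke a uniform multiplier $c_0$ with $c_0\overline{I^m}\subseteq I^m$ for all $m$; existence of such a $c_0$ needs the normalization of the Rees algebra to be module-finite, which is not automatic for an arbitrary noetherian ring. The paper instead proves a small elementary lemma (Lemma~\ref{lem - rational power description}): if $x^b\in\overline{I^a}$ then a single multiplier from $\overline{I^a}$ and one element of $I^a\cap R^\circ$ already give $cx^n\in I^{\lceil (a/b)n\rceil}$ for all large $n$. No valuation theory, no uniform-multiplier hypothesis.

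For the monomial equality, your start matches the paper's Theorem~\ref{thm - sclosure is mixed power criterion}: homogeneity of the witness forces $cx^q$ into one summand, and pigeonhole gives one alternative for infinitely many $q$. The divergence is in how you finish. You plan to analyze \emph{both} alternatives polyhedrally, and you correctly note that identifying the limiting region from the $I^{[q]}$-alternative with the combinatorial description of $I^*$ in a semigroup ring is the delicate step. The paper never touches this: it argues contrapositively. If $x\notin I^*$, then for the fixed witness $c$ there are infinitely many $q$ with $cx^q\notin I^{[q]}$; since for all large $q$ it lies in one summand, it must lie in $I^{\lceil sq\rceil}$ for infinitely many $q$. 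Part~(2) of Lemma~\ref{lem - rational power description} (together with the discreteness of rational powers, Theorem~\ref{thm - discrete rational powers}) then gives $x\in I_s$ directly. No description of $I^*$ is ever needed, so the polynomial and semigroup cases are handled uniformly and without the bookkeeping you anticipate.

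In short: your plan works, but the paper's key simplification is to treat the dichotomy asymmetrically---prove only that ``not in $I^*$'' forces ``in $I_s$''---which eliminates the hard case entirely.
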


\begin{theorem*}[\ref{thm-graded sclosure}, \ref{thm - min degree}, \ref{thm - elements forced in}]
Let $R$ be an $\N$-graded ring of characteristic $p>0$, $I\subseteq R$ a homogeneous ideal, $x\in R$ a homogeneous element, and $s\geq 1$.
\begin{enumerate}
	\item $\wsc{s}{I}$ and $\scl{s}{I}$ are homogeneous ideals.
	\item If all generators of $I$ have degree at least $d$ and $x\in\wsc{s}{I}\setminus I^*$, then $\deg x\geq sd$.
	\item If $(R,\m)$ is graded local, $I$ is $\m$-primary and generated in degree at most $d$, and $\deg x\geq sd$, then $x\in \wsc{s}{I}$.
\end{enumerate}
\end{theorem*}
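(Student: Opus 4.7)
For (1), the plan is to show $\wsc{s}{I}$ is homogeneous; then $\scl{s}{I}$ follows because iterating a homogeneity-preserving operation stabilizes in finitely many steps by noetherianity. For $\wsc{s}{I}$, I would reduce to the graded-domain case via Theorem~\ref{thm - all ideals have MP}: minimal primes of $\N$-graded rings are homogeneous, so each $R/\p$ is a graded domain and the homogeneous components of $x$ pass correctly to $R/\p$. In the graded domain case, let $c\neq 0$ witness $x\in\wsc{s}{I}$ and write $x=x_{d_1}+\cdots+x_{d_k}$ with $d_1<\cdots<d_k$ and $c=c_{e_1}+\cdots+c_{e_l}$ with $e_1<\cdots<e_l$. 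The unique lowest-degree homogeneous summand of $cx^q$ is $c_{e_1}x_{d_1}^q$ in degree $e_1+qd_1$, and it lies in the homogeneous ideal $I^{\lceil sq\rceil}+I^{[q]}$; since $c_{e_1}\neq 0$ in a domain, this gives $x_{d_1}\in\wsc{s}{I}$, and induction on $k$ completes the argument.

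For (2), apply (1) to take $x$ homogeneous and homogenize $c$ analogously. For each large $q$, using homogeneity of $I^{\lceil sq\rceil}$ and $I^{[q]}$, decompose $cx^q=a_q+b_q$ with $a_q\in I^{\lceil sq\rceil}$ and $b_q\in I^{[q]}$ homogeneous of common degree $\deg c+q\deg x$. Since $x\notin I^*$, the set of $q$ with $cx^q\notin I^{[q]}$ is infinite, and for any such $q$ we have $a_q\neq 0$ (else $cx^q=b_q\in I^{[q]}$). Because every generator of $I$ has degree at least $d$, $I^{\lceil sq\rceil}\subseteq R_{\geq\lceil sq\rceil d}$, so $\deg c+q\deg x\geq\lceil sq\rceil d\geq sqd$. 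Dividing by $q$ and letting $q\to\infty$ along this infinite set yields $\deg x\geq sd$.

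For (3), the heart is a regularity estimate: there is an integer $M$, depending on $I$ but not on $n$, with $R_{\geq nd+M}\subseteq I^n$ for all $n\geq 1$. To prove it, let $c_0$ be the top degree of the Artinian graded ring $R/I$ (Artinian because $I$ is $\m$-primary), so $R_{>c_0}\subseteq I$, and induct on $n$: a homogeneous $y$ of degree $>c_0+(n-1)d$ lies in $I$, hence $y=\sum g_ih_i$ with $g_i$ a homogeneous generator of $I$ of degree $\leq d$ and $h_i$ homogeneous of degree $>c_0+(n-2)d$, so $h_i\in I^{n-1}$ inductively and $y\in I^n$. Given $x$ with $\deg x\geq sd$, pick a homogeneous $c\notin\bigcup_i\p_i$ of degree $\geq c_0+1$; such $c$ exists by graded prime avoidance applied to $(R_+)^{c_0+1}$ when $\dim R\geq 1$, while the zero-dimensional case is automatic since $x$ is then nilpotent. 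For this $c$, $\deg(cx^q)\geq(c_0+1)+sqd\geq\lceil sq\rceil d+M$ for all $q$, so $cx^q\in R_{\geq\lceil sq\rceil d+M}\subseteq I^{\lceil sq\rceil}\subseteq I^{\lceil sq\rceil}+I^{[q]}$, confirming $x\in\wsc{s}{I}$.

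The main obstacle is the regularity estimate in (3), which genuinely uses both the graded-local hypothesis (so $R/I$ is Artinian with a well-defined top degree) and the generator-degree bound; the reduction-to-domain step in (1) via Theorem~\ref{thm - all ideals have MP} is the key device that sidesteps the technical issue of replacing a multiplier by a homogeneous one directly.
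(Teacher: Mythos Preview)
Your arguments are correct, and part (2) is essentially the paper's contrapositive degree count. The phrase ``homogenize $c$ analogously'' is imprecise---in a non-domain one cannot generally replace $c$ by a single homogeneous element of $R^\circ$---but since $x$ is homogeneous the components $c_i x^q$ land in distinct degrees and each lies in $I^{(s,q)}$, which is all the argument needs; the paper is equally casual on this point.

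Parts (1) and (3) take genuinely different routes from the paper. For (1), the paper argues directly without reducing to a domain: writing $x=\sum_j x_j$ and $c=\sum_i c_i$, once $q$ exceeds the spread of the degrees occurring in $c$ the products $c_i x_j^q$ lie in pairwise distinct degrees, so each is itself a homogeneous component of $cx^q\in I^{(s,q)}$; summing over $i$ yields $cx_j^q\in I^{(s,q)}$. This is self-contained, whereas your route imports Theorem~\ref{thm - all ideals have MP} as an external device.

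For (3) the contrast is sharper. The paper uses that $k[f_1,\ldots,f_m]\subseteq R$ is module-finite (since $I$ is $\m$-primary) to write an equation of integral dependence for $x^\delta$; a degree count forces the coefficients into powers $I^{\Delta i}$, so $x^\delta\in\overline{I^\Delta}$, i.e.\ $x\in I_{\Delta/\delta}$, and Theorem~\ref{thm - rational powers to sclosure} finishes. Your regularity estimate $R_{>c_0+(n-1)d}\subseteq I^n$ is more elementary and produces a witness $c$ directly, bypassing rational powers entirely. The paper's approach ties (3) into the $I^*+I_s$ framework used elsewhere; yours is shorter but the graded-prime-avoidance step needs no minimal prime to contain $R_+$, which is guaranteed when $R_0$ is a field (the actual hypothesis of Theorem~\ref{thm - elements forced in}) but not for arbitrary graded-local rings.
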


\begin{theorem*}[\ref{cor - ideals with LS}] For the following classes of ideals, $\wsc{s}{I}=\scl{s}{I}$.
\begin{enumerate}
	\item Monomial ideals in polynomial rings, or more generally affine semigroup rings, over a field
	\item Principal ideals
	\item Powers of $R_+$, where $R$ is an $\N$-graded ring generated in degree 1 over $R_0$ and $R_+$ is generated by all elements of positive degree.
\end{enumerate}
\end{theorem*}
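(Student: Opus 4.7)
The plan is to establish idempotency of the weak $s$-closure for each class: $\wsc{s}{\wsc{s}{I}}=\wsc{s}{I}$, which is equivalent to $\wsc{s}{I}=\scl{s}{I}$ since $\scl{s}{I}$ is defined as the stable value of iterating $\wsc{s}{\cdot}$.

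Case (2) is the cleanest and I would dispatch it first. For $I=(f)$ and $s\geq 1$, the containment $(f)^{\lceil sq\rceil}\subseteq(f)^q=(f)^{[q]}$ (valid since $\lceil sq\rceil\geq q$) collapses the defining sum for weak $s$-closure to the sum $(f^q)$ defining tight closure, giving $\wsc{s}{(f)}=(f)^*$. Crucially, this value does not depend on $s\in[1,\infty)$, so for $s>1$ one picks any $t$ with $1\leq t<s$ to obtain $\wsc{t}{(f)}=(f)^*=\wsc{s}{(f)}$; Lemma~\ref{lem - t-closure is s-closed} then yields $\wsc{s}{\wsc{s}{(f)}}=\wsc{s}{\wsc{t}{(f)}}=\wsc{t}{(f)}=\wsc{s}{(f)}$. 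For $s=1$ the weak closure is the integral closure, which is itself idempotent.

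For the monomial case (1), the starting point is the explicit formula $\wsc{s}{I}=I^*+I_s$ from Theorem~\ref{thm - sclosure is mixed power criterion}, together with the fact that tight closure of a monomial ideal in a semigroup ring over a field is monomial (Hochster-Huneke), so $J:=\wsc{s}{I}$ is monomial. Reapplying the formula gives $\wsc{s}{J}=J^*+J_s$. A Newton polyhedron computation yields $\nu(J)=\nu(I^*)\cup s\,\nu(I)=\nu(I)$, using $\nu(I^*)=\nu(I)$ and $s\,\nu(I)\subseteq\nu(I)$ for $s\geq 1$; consequently $J_s$ corresponds to $s\,\nu(I)=\nu(I_s)$, so $J_s=I_s\subseteq J$. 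The containment $J^*\subseteq J$ holds by regularity in the polynomial ring subcase, and in the semigroup subcase by the Newton polyhedron description of monomial tight closure.

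For case (3), write $I=(R_+)^k$, which is generated exactly in degree $k$; Theorem~\ref{thm - min degree} and Theorem~\ref{thm - elements forced in} together pin down $\wsc{s}{I}=I^*+(R_+)^{\lceil sk\rceil}$, since the first forces elements of $\wsc{s}{I}\setminus I^*$ to have degree at least $sk$ and the second forces every homogeneous element of degree at least $sk$ into $\wsc{s}{I}$. Applying the same two theorems to $J=\wsc{s}{I}$---whose minimal generators still begin in degree $k$---reduces the question to showing that $J$ agrees with its tight closure, since $J$ already contains the full graded piece above degree $\lceil sk\rceil$. This final coincidence is controlled by the min-degree bound applied to $J^*$, persistence, and idempotency of tight closure, which together prevent $J^*$ from introducing new low-degree elements beyond those of $I^*$. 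The principal obstacle across cases (1) and (3) is exactly this tight-closure verification: confirming that the explicit description of $\wsc{s}{I}$ is already $^*$-closed, which requires careful degree- or polyhedron-theoretic bookkeeping anchored in the structural theorems of the paper.
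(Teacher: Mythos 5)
Your case (2) is correct and close in spirit to the paper: you show the weak $s$-closure of a principal ideal is independent of $s$ (you via the containment $(f)^{\lceil sq\rceil}\subseteq (f)^{[q]}$, the paper via $I^*=\overline{I}$ for principal $I$), and you then invoke Lemma~\ref{lem - t-closure is s-closed} with some $t<s$, which is precisely the engine behind the paper's implication chain.

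For cases (1) and (3), however, you've identified the right starting point---the formula $\wsc{s}{I}=I^*+I_s$ from Theorem~\ref{thm - sclosure is mixed power criterion}---but then taken a much harder road. You attempt to compute $\wsc{s}{J}$ for $J=\wsc{s}{I}$ directly, which forces you to control $J^*$. You yourself flag this as ``the principal obstacle,'' and it genuinely is one: in an affine semigroup ring tight closure of a monomial ideal is not simply given by a Newton-polyhedron condition (that describes integral closure, not tight closure), so the claim that $J^*\subseteq J$ in the semigroup subcase is unsubstantiated. For case (3) you additionally lean on Theorem~\ref{thm - elements forced in}, which requires $I$ to be $\m$-primary in a graded local ring, hence implicitly requires $R_0$ to be a field; the statement to be proved does not assume this, and your identification $\wsc{s}{I}=I^*+(R_+)^{\lceil sk\rceil}$ doesn't match the $I^*+I_s$ the paper derives. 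Finally, you never address irrational $s$, where Theorem~\ref{thm - sclosure is mixed power criterion} does not directly apply.

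The move you are missing is the one the paper exploits through Theorem~\ref{thm - sclosure=rational implies LS}: once one knows $\wsc{s}{I}=I^*+I_s$ for all \emph{rational} $s$, the discreteness of rational powers (Theorem~\ref{thm - discrete rational powers}) forces the function $s\mapsto\wsc{s}{I}$ to be locally constant from the left. Concretely, there is $t<s$ with $\wsc{t}{I}=\wsc{s}{I}$; then Lemma~\ref{lem - t-closure is s-closed} immediately gives $\wsc{s}{\bigl(\wsc{s}{I}\bigr)}=\wsc{s}{\bigl(\wsc{t}{I}\bigr)}=\wsc{t}{I}=\wsc{s}{I}$. This sidesteps any need to understand $(\wsc{s}{I})^*$ or to re-run the graded analysis on $\wsc{s}{I}$, and it handles irrational $s$ automatically since the left-stability interval contains rationals. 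That is the key insight: left-stability, not a second application of the structural formula, is what delivers idempotence.
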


\begin{theorem*}[\ref{theorem - Briancon-Skoda}] 
Let $R$ be a ring, $1\leq t< s$, and $I$ an ideal of $R$.  If $r\geq \frac{(\mu(I)-1)(s-t)}{t(s-1)}$, then for all $n\in\N$, $\wsc{t}{\left(I^{n+r}\right)}\subseteq \wsc{s}{\left(I^n\right)}$.
\end{theorem*}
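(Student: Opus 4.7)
My plan is to unpack the weak $t$-closure hypothesis, reduce it to a $q$-uniform ideal containment, and then verify that containment by a pigeonhole argument on monomial generators together with a case analysis in $n$.  Given $x\in\wsc{t}{(I^{n+r})}$, there exists $c$ avoiding every minimal prime such that $cx^q \in (I^{n+r})^{\lceil tq\rceil} + (I^{n+r})^{[q]}$ for $q \gg 0$.  Since $I^{n+r} \subseteq I^n$ gives $(I^{n+r})^{[q]} \subseteq (I^n)^{[q]}$, and $(I^{n+r})^{\lceil tq\rceil} = I^{(n+r)\lceil tq\rceil}$, it suffices to show
\[
I^{(n+r)\lceil tq\rceil} \subseteq I^{n\lceil sq\rceil} + (I^n)^{[q]} \qquad \text{for all } q \gg 0,
\]
since this places $cx^q$ inside $(I^n)^{\lceil sq\rceil} + (I^n)^{[q]}$ and yields $x\in\wsc{s}{(I^n)}$.

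For this containment I fix generators $a_1,\ldots,a_\mu$ of $I$ with $\mu = \mu(I)$ and check it on the monomials $a_1^{e_1}\cdots a_\mu^{e_\mu}$ with $\sum e_i = N := (n+r)\lceil tq\rceil$.  Writing $e_i = qd_i + f_i$ with $0 \leq f_i < q$ and $M := \sum d_i$, the monomial factors as $\bigl(\prod_i (a_i^q)^{d_i}\bigr)\cdot\prod_i a_i^{f_i}$; if $M \geq n$ this sits in $(I^{[q]})^n = (I^n)^{[q]}$, and if $M \leq n-1$ then $N = qM + \sum f_i \leq (n+\mu-1)q - \mu$.  Hence the containment holds for the given $q$ as soon as one of
\[
\text{(A)}\ (n+r)\lceil tq\rceil \geq n\lceil sq\rceil, \qquad \text{(B)}\ (n+r)\lceil tq\rceil \geq (n+\mu-1)q - \mu + 1
\]
is true: (A) forces $I^N \subseteq I^{n\lceil sq\rceil}$, while (B) rules out the $M \leq n-1$ case for every monomial.

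It remains to check that the hypothesis $r \geq (\mu-1)(s-t)/(t(s-1))$ forces (A) or (B) for all large $q$, uniformly in $n \in \N$.  Ignoring rounding, (A) amounts to $r \geq n(s-t)/t$ (increasing linearly in $n$) and (B) to $r \geq (\mu-1)/t - n(t-1)/t$ (decreasing linearly, or constant when $t = 1$); these two lines meet at $n^{\ast} = (\mu-1)/(s-1)$ at height $(\mu-1)(s-t)/(t(s-1))$, which is precisely the hypothesized lower bound on $r$.  For integer $n < n^{\ast}$, strict inequality in $r \geq n(s-t)/t$ absorbs the $\lceil\cdot\rceil$ rounding once $q$ is large enough and yields (A); for $n > n^{\ast}$, strict inequality in $r \geq (\mu-1)/t - n(t-1)/t$ yields (B) similarly.

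The main obstacle I anticipate is the borderline case $n = n^{\ast} \in \N$, where the hypothesis is tight and (A) has zero asymptotic slack, so its $\lceil\cdot\rceil$ corrections need not cooperate.  I expect to resolve this by passing to (B) at the peak: there $(n+r)t = ns = n + \mu - 1$, so $(n+r)\lceil tq\rceil \geq (n+r)tq = (n+\mu-1)q$, which exceeds $(n+\mu-1)q - \mu + 1$ by the nonnegative margin $\mu - 1$.  Thus (B) covers the closed right half of the tent including the peak, while (A) covers the strict left half, handling every integer $n$ uniformly.
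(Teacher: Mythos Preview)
Your proof is correct and follows essentially the same approach as the paper: the paper also splits on whether $n < \frac{\mu(I)-1}{s-1}$ (your condition (A)) or $n \geq \frac{\mu(I)-1}{s-1}$ (your condition (B), with the borderline $n=n^\ast$ absorbed into this case), and in the latter case uses exactly your pigeonhole observation $I^{(n+\mu-1)q}\subseteq (I^n)^{[q]}$, which the paper has recorded separately as Lemma~\ref{lem - basic interaction}. The only cosmetic difference is that the paper handles $n\geq n^\ast$ in one stroke rather than isolating the peak, since the inequality $(n+r)t\geq n+\mu-1$ already holds there with the needed nonnegative slack.
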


An outline of this paper is as follows.  In section~\ref{sec - preliminaries}, we give the basic definitions and results on powers of ideals that we use throughout the paper.  We also record some results about rational powers of ideals, and prove a characterization of them which is particularly relevant to us.  In section~\ref{sec - graded rings}, we consider the $s$-closure of homogeneous ideals in graded rings.  We obtain degree conditions which can be used in some cases to check the membership or non-membership of a homogeneous element in the $s$-closure of a homogeneous ideal.  Section~\ref{sec - ideal conditions} considers the question of when $\wsc{s}{I}=\scl{s}{I}$, and gives some sufficient conditions on $I$ for equality to hold.  Section~\ref{sec - BS} includes our generalization of the Brian\c con-Skoda theorem which compares any two $s$-closures.

\section{Preliminaries}\label{sec - preliminaries}

Throughout this paper, all rings $R$ are assumed to be commutative and noetherian, and the 
notation $R^\circ$ indicates the set of all elements of $R$ not in any minimal prime ideal.  For an ideal $I$, we use $\mu(I)$ for the minimal number of generators of $I$.  When we work with a ring of positive characteristic $p$, the symbols $q$ and $q'$ stand for positive integer powers of $p$.  For an ideal $I$ in a ring of characteristic $p>0$, the ideal $I^{[q]}=(f^q\mid f\in I)$ is called the $q$th Frobenius power of $I$, and is generated as an ideal by the $q$th powers of any set of generators of $I$. 

We are interested in the relationships between ordinary and Frobenius powers of ideals.  In particular, we rely on the following result.

\begin{lemma}\label{lem - basic interaction} If $R$ is a ring of characteristic $p>0$, $h\geq 0$ is a real number, $I$ is an ideal of $R$, and $q$ is a power of $p$, then $I^{\lceil h\rceil}\subseteq \left(I^{[q]}\right)^{\lfloor h/q -\mu(I)+1\rfloor}\subseteq \left(I^{[q]}\right)^{\lceil h/q -\mu(I)\rceil}$ 
\end{lemma}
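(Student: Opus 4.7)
The plan is to write $I=(f_1,\dots,f_\mu)$ with $\mu=\mu(I)$ so that $I^{\lceil h\rceil}$ is generated by monomials $f_1^{a_1}\cdots f_\mu^{a_\mu}$ with $\sum a_i \ge \lceil h\rceil \ge h$, and then use division with remainder by $q$ on each exponent to pull out as many $q$-th powers as possible. This is a standard pigeonhole trick and the only real content of the lemma.

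For each $i$, write $a_i = q b_i + r_i$ with $0\le r_i \le q-1$, so that $f_i^{a_i}=(f_i^q)^{b_i}f_i^{r_i}$ and hence $f_1^{a_1}\cdots f_\mu^{a_\mu} \in (I^{[q]})^{\sum b_i}$ (since $f_i^q \in I^{[q]}$ and the remaining factor lies in $R$). The arithmetic step is to bound $\sum b_i$ from below:
\[
\sum_{i=1}^\mu b_i \;=\; \frac{\sum a_i - \sum r_i}{q} \;\ge\; \frac{h-\mu(q-1)}{q} \;=\; \frac{h}{q}-\mu+\frac{\mu}{q} \;>\; \frac{h}{q}-\mu .
\]
Since $\sum b_i$ is an integer strictly greater than $h/q-\mu$, it must satisfy $\sum b_i \ge \lfloor h/q-\mu\rfloor + 1$, and because $\mu$ is an integer this equals $\lfloor h/q-\mu+1\rfloor$. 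This establishes the first containment (and remains trivially true when $\lfloor h/q-\mu+1\rfloor \le 0$, since then $(I^{[q]})^{\lfloor h/q-\mu+1\rfloor}=R$).

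The second containment is purely numerical: for any real $x$, one has $\lfloor x+1\rfloor \ge \lceil x\rceil$ (with equality unless $x\in\Z$), so with $x = h/q-\mu$ we get $\lfloor h/q-\mu+1\rfloor \ge \lceil h/q-\mu\rceil$, whence $(I^{[q]})^{\lfloor h/q-\mu+1\rfloor} \subseteq (I^{[q]})^{\lceil h/q-\mu\rceil}$. No step is a genuine obstacle; the only subtlety is keeping the strict-versus-weak inequalities straight so that the $+1$ in the first floor is justified, and recognizing that the strict inequality $\sum b_i > h/q-\mu$ is exactly what allows the floor to be taken as $\lfloor h/q-\mu+1\rfloor$ rather than $\lceil h/q-\mu\rceil$.
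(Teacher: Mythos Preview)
Your proof is correct and follows essentially the same approach as the paper: pick generators $f_1,\dots,f_\mu$, write each exponent via division with remainder by $q$, and bound the sum of quotients from below using $\sum r_i\le \mu(q-1)$ to get the strict inequality that justifies the floor. The paper's argument is identical up to notation, including the final observation that $\lfloor \alpha+1\rfloor\ge\lceil\alpha\rceil$.
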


\begin{proof} Let $x_1,\ldots, x_{\mu(I)}$ be a set of generators for $I$.  For any generator $x$ of $I^{\lceil h\rceil}$, there exist $a_i,b_i\in\N$ such that $b_i<q$, $\sum a_iq+b_i=\lceil h\rceil$, and 
\[x=\prod_{i=1}^{\mu(I)} x_i^{qa_i+b_i}=\prod_{i=1}^{\mu(I)}\left( x_i^{q}\right)^{a_i}\cdot\prod_{i=1}^{\mu(I)} x_i^{b_i}\in \left(I^{[q]}\right)^{\sum_i a_i}.\]
 Furthermore,
\[\sum_{i=1}^{\mu(I)} a_i = \sum_{i=1}^{\mu(I)} \frac{qa_i+b_i-b_i}{q}=\frac{\lceil h\rceil}{q}-\sum_i \frac{b_i}{q}\geq \frac{\lceil h\rceil}{q}-\mu(I)\frac{q-1}{q}>\frac{ h}{q}-\mu(I)\]
Therefore, since $\sum_i a_i$ is an integer,
$\sum_i a_i\geq   \left\lfloor h/q -\mu(I)+1\right\rfloor$.

The last containment is implied by the fact that $\lceil \alpha\rceil\leq \lfloor \alpha+1\rfloor$ for all real $\alpha$. 
\end{proof}

\subsection*{Mixed Powers and $s$-Closure}

 Given a ring $R$ and ideal $I$, the integral closure $\overline{I}$ of $I$ is the set of all $x\in R$ such that there exists $c\in R^\circ$ such that $cx^n\in I^n$ for infinitely many positive integers $n$, or equivalently all sufficiently large integers $n$ \cite[Corollary 6.8.12]{Huneke-Swanson-IntegralClosure}.  When $R$ has characteristic $p>0$, the tight closure $I^*$ of $I$ is the set of all $x\in R$ such that there exists $c\in R^\circ$ such that $cx^q\in I^{[q]}$ for all $q\gg 0$.  The similarity between these two descriptions suggests a method of interpolating between the two closures.    We begin by considering a set of ideals which interpolate between ordinary powers and Frobenius powers of an ideal.
 
 \begin{definition} Let $R$ be a ring of characteristic $p>0$, $s\geq 1$ a real number, and $I$ an ideal of $R$.  For any $q$, the \emph{$(s,q)$ mixed power of $I$} is 
\[I^{(s,q)} = I^{\lceil sq\rceil}+I^{[q]}.\]
\end{definition}

Note that $I^{(1,q)}=I^{q}$, and that if $s\geq\mu(I)$, then $I^{(s,q)}=I^{[q]}$.  Furthermore, we have that if $s>t$, then $I^{(s,q)}\subseteq I^{(t,q)}$.  Therefore, the ideals $I^{(s,q)}$ form a decreasing family of ideals parameterized by $s$.  
In \cite{Taylor-Interpolating}, the author used the mixed powers defined above to construct a family of closures which lie between integral closure and tight closure.

\begin{definition}\cite[Definition 4.1]{Taylor-Interpolating} \label{def - s-closure} Let $R$ be a ring of characteristic $p>0$, $s\geq 1$ a real number, and $I$ an ideal of $R$.  The \emph{weak $s$-closure} of $I$, denoted $\wsc{s}{I}$, is the set of all $x\in R$ such that there exists $c\in R$ such that for all $q\gg 0$, $cx^q\in I^{(s,q)}$.
\end{definition}

It is easy to see that $\wsc{s}{I}$ is an ideal containing $I$, and that if $I\subseteq J$ then $\wsc{s}{I}\subseteq \wsc{s}{J}$, but it is not clear that the weak $s$-closure is idempotent.  Thus, to construct a true closure operation, we apply the weak $s$-closure repeatedly.

\begin{definition}\cite[Definition 4.3]{Taylor-Interpolating}  Let $R$ be a ring of characteristic $p>0$, $s\geq 1$, and $I$ an ideal of $R$.  The \emph{$s$-closure} of $I$, denoted $\scl{s}{I}$, is the ideal at which the following increasing chain of ideals stabilizes:
\[I\subseteq\wsc{s}{I}\subseteq\wsc{s}{\left(\wsc{s}{I}\right)}\subseteq\wsc{s}{\left(\wsc{s}{\left(\wsc{s}{I}\right)}\right)} \cdots.\]
\end{definition}

It is not known whether $\wsc{s}{I}=\scl{s}{I}$ for all $s$ and ideals $I$.  The condition that $\wsc{s}{I}=\scl{s}{I}$ is explored in Section~\ref{sec - ideal conditions}.

Since $s>t$ implies $I^{(s,q)}\subseteq I^{(t,q)}$, we have that if $s>t$, then  $\wsc{s}{I}\subseteq \wsc{t}{I}$ and $\scl{s}{I}\subseteq \scl{t}{I}$.  Moreover, since $I^{[q]}\subseteq I^{(s,q)}\subseteq I^q$ for all ideals $I$, $s\geq 1$ and $q$, we have that $I^*\subseteq \wsc{s}{I}\subseteq \scl{s}{I}\subseteq \overline{I}$ for all $s$ and $I$.

 Furthermore, when $s$ is very small or very large, some of the containments above become equalities.

\begin{theorem}  If $R$ is a ring of characteristic $p>0$ and $I$ an ideal of $R$, then the following hold.
\begin{enumerate}
\item \label{s=1 is integral}$\wsc{1}{I}=\scl{1}{I}=\overline{I}$.

\item \label{s big is tight}If either $s\geq \mu(I)$ or $s>\mu(J)$, where $J$ is a reduction of $I$, then $\wsc{s}{I}=I^*$.  In particular, if $R$ is local with infinite residue field and $s>\dim R$, then $\wsc{s}{I}=\scl{s}{I}=I^*$.
\end{enumerate}
\end{theorem}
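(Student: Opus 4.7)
The plan is to reduce both parts to the observations already recorded in the excerpt, namely $I^{(1,q)}=I^q$ and (for $s\geq\mu(I)$) $I^{(s,q)}=I^{[q]}$, together with the standard characterizations of integral and tight closure.

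For part~(1), the identity $I^{(1,q)}=I^q$ makes $\wsc{1}{I}$ exactly the set of $x\in R$ admitting some $c\in R^\circ$ with $cx^q\in I^q$ for all $q\gg 0$. Since \cite[Corollary 6.8.12]{Huneke-Swanson-IntegralClosure} characterizes integral closure using any infinite set of exponents, and the powers of $p$ form an infinite set, this yields $\wsc{1}{I}=\overline{I}$; idempotence of integral closure then gives $\scl{1}{I}=\overline{I}$.

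For part~(2) when $s\geq\mu(I)$, the noted equality $I^{(s,q)}=I^{[q]}$ makes $\wsc{s}{I}=I^*$ immediate from comparing the definitions of weak $s$-closure and tight closure.

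The case $s>\mu(J)$ for a reduction $J$ of $I$ is the main obstacle, and requires chaining the reduction property with Lemma~\ref{lem - basic interaction}. Fix $n_0$ with $JI^{n_0}=I^{n_0+1}$, so that $I^m\subseteq J^{m-n_0}$ for all $m\geq n_0$. For $q$ large enough that $h:=\lceil sq\rceil-n_0\geq \mu(J)q$ (eventually true since $s-\mu(J)>0$), Lemma~\ref{lem - basic interaction} applied to $J$ gives
\[ J^h \subseteq \bigl(J^{[q]}\bigr)^{\lfloor h/q-\mu(J)+1\rfloor} \subseteq J^{[q]}\subseteq I^{[q]}, \]
the middle exponent being at least $1$ by the threshold just chosen. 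Chaining with $I^{\lceil sq\rceil}\subseteq J^h$ collapses $I^{(s,q)}$ to $I^{[q]}$ for $q\gg 0$, whence $\wsc{s}{I}=I^*$. For the ``in particular'' clause, a local ring with infinite residue field admits a minimal reduction $J$ of $I$ whose number of generators equals the analytic spread, which is at most $\dim R<s$; and $\scl{s}{I}=I^*$ follows because $\overline{I^*}=\overline{I}=\overline{J}$ makes the same $J$ a reduction of $I^*$, so applying what we just proved to $I^*$ yields $\wsc{s}{I^*}=(I^*)^*=I^*$ and stabilizes the chain defining $\scl{s}{I}$.
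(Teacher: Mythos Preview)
Your proof is correct and follows essentially the same route as the paper: part~(1) via $I^{(1,q)}=I^q$ and the Huneke--Swanson characterization, part~(2) via $I^{(s,q)}=I^{[q]}$ when $s\geq\mu(I)$, and in the reduction case via $I^{\lceil sq\rceil}\subseteq J^{\lceil sq\rceil-n_0}$ together with Lemma~\ref{lem - basic interaction} to force $I^{(s,q)}=I^{[q]}$ for $q\gg0$. The only minor difference is in the ``in particular'' clause: the paper observes that weak $s$-closure coincides with tight closure on \emph{every} ideal (since every ideal has a reduction with at most $\dim R$ generators) and concludes idempotence globally, whereas you argue more locally that the same $J$ is a reduction of $I^*$; both arguments are valid and equally short.
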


\begin{proof} \eqref{s=1 is integral} If $x\in \wsc{1}{I}$, then there exists $c\in R^\circ$ such that $cx^q\in I^{(1,q)}=I^q$ for all $q\gg 0$, and hence $x\in \overline{I}$.  If $x\in \overline{I}$, then there exists $c\in R^\circ$ such that $cx^n\in I^n$ for all $n\gg 0$, and hence $cx^q\in I^q=I^{(1,q)}$ for all $q\gg 0$, and so $x\in \wsc{1}{I}$.  Therefore $\wsc{1}{I}=\overline{I}$, and since this holds for all ideals $I$, we have that weak $1$-closure and tight closure are the same operation, hence weak $1$-closure is idempotent, i.e.\ $\wsc{1}{I}=\scl{1}{I}$.

\eqref{s big is tight}  If $s\geq \mu(I)$ and $x\in \wsc{s}{I}$, then there exists $c\in R^\circ$ such that $cx^q\in I^{(s,q)}=I^{[q]}$ for all $q\gg0$, and therefore $x\in I^*$.  

Suppose $s>\mu(J)$, where $J$ is a reduction of $I$ with reduction number $w$ and $s>\mu(J)$.  Since $I^*\subseteq \wsc{s}{I}$ we need only show that $\wsc{s}{I}\subseteq I^*$.  Let $x\in \wsc{s}{I}$ and let $c\in R^\circ$ such that $cx^q\in I^{(s,q)}$ for $q\gg 0$.  Now, for large enough $q$, we have that $\frac{w}{q}\leq s-\mu(J)$, and so  by Lemma~\ref{lem - basic interaction},
\[cx^q\in I^{(s,q)} = I^{\lceil sq\rceil}+ I^{[q]} \subseteq J^{\lceil sq-w\rceil}+I^{[q]}
\subseteq \left(J^{[q]}\right)^{\lfloor s-w/q -\mu(J)+1\rfloor} + I^{[q]}\subseteq J^{[q]}+I^{[q]} = I^{[q]}.\]
Therefore $x\in I^*$.

If $R$ is local with infinite residue field, then every ideal has a reduction generated by at most $\dim R$ elements.  Hence in this case, if $s>\dim R$, then weak $s$-closure and tight closure are the same operation, and so in particular weak $s$-closure is idempotent.  Hence for any ideal $I$, $\scl{s}{I}=\wsc{s}{I}=I^*$. 
\end{proof}

\subsection*{Rational Powers}

The notion of rational powers of ideals is related to that of $s$-closure.  In particular, rational powers will be used to describe the $s$-closures of certain kinds of ideals in graded rings in Theorem~\ref{thm - sclosure is mixed power criterion}.  The presentation here is based on \cite[Section 10.5]{Huneke-Swanson-IntegralClosure}.

\begin{definition} Let $R$ be a ring, $I\subseteq R$ an ideal, and $\alpha\in \Q_{\geq 0}$.  The \emph{$\alpha$th rational power of $I$}, denoted $I_\alpha$, is the set of all $x\in R$ such that $x^b\in \overline{I^a}$, where $\alpha=\frac{a}{b} $, $a,b\in\N$.
\end{definition}

The ideal $I_\alpha$ does not depend on the choice of representation of $\alpha$ as a fraction.  
The most important property of rational powers that we will use is the following.

\begin{theorem}\label{thm - discrete rational powers}(\cite[Propositon 10.5.5]{Huneke-Swanson-IntegralClosure}) Let $R$ be a ring and $I\subseteq R$ an ideal of positive height.  There exists $e\in\N$ such that for all $\alpha\in\Q_{\geq 0}$, $I_\alpha = I_{\lceil \alpha e\rceil/e}$.
\end{theorem}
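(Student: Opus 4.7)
The plan is to pass through Rees valuations. For an ideal $I$ of positive height in a Noetherian ring $R$, there is a finite set of discrete valuations $v_1, \dots, v_r$ — the \emph{Rees valuations} of $I$ — such that
\[\overline{I^n} = \{x \in R : v_j(x) \geq n\,v_j(I) \text{ for all } j\}\]
for every $n \in \N$; this is developed in \cite[Chapter 10]{Huneke-Swanson-IntegralClosure}. Unwinding the definition of rational powers, for $\alpha = a/b$ the condition $x^b \in \overline{I^a}$ reads $b\,v_j(x) \geq a\,v_j(I)$ for all $j$, i.e.\ $v_j(x) \geq \alpha\,v_j(I)$, or (since $v_j(x) \in \Z$) $v_j(x) \geq \lceil \alpha\,v_j(I)\rceil$. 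Hence
\[I_\alpha = \{x \in R : v_j(x) \geq \lceil \alpha\,v_j(I)\rceil \text{ for all } j\},\]
so $I_\alpha$ is determined by the integer vector $\left(\lceil \alpha\,v_j(I)\rceil\right)_{j=1}^r$.

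I would then set $e := \mathrm{lcm}(v_1(I), \dots, v_r(I))$ and $\beta := \lceil \alpha e\rceil/e$, and verify the numerical identity $\lceil \alpha\,v_j(I)\rceil = \lceil \beta\,v_j(I)\rceil$ for every $j$. Writing $k_j := e/v_j(I) \in \N$, the identity reduces to $\lceil \lceil \alpha e\rceil/k_j\rceil = \lceil \alpha e/k_j\rceil$: one direction is monotonicity in $\alpha$, and the other is the observation that $k_j\lceil \alpha e/k_j\rceil$ is an integer at least $\alpha e$, hence at least $\lceil \alpha e\rceil$, so that $\beta\,v_j(I) = \lceil \alpha e\rceil/k_j \leq \lceil \alpha e/k_j\rceil = \lceil \alpha\,v_j(I)\rceil$. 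Combined with the valuative description above, this gives $I_\alpha = I_\beta$, as required.

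The main obstacle is the invocation of the finite Rees-valuation description of $\overline{I^n}$ in the first paragraph: this is itself a substantial theorem, ultimately resting on the fact that the integral closure of the Rees algebra $R[It]$ is module-finite over $R[It]$ under the positive-height hypothesis. Once that input is granted, the remaining content is the short numerical check above, and one sees that the $e$ produced is essentially the least common multiple of the values $v_j(I)$ at the Rees valuations of $I$.
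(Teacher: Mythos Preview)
The paper does not supply a proof of this theorem: it is quoted verbatim as \cite[Proposition~10.5.5]{Huneke-Swanson-IntegralClosure} and used as a black box. Your proposal is correct and is, in substance, precisely the argument given in that reference: one uses the finite set of Rees valuations $v_1,\dots,v_r$ of $I$ to rewrite $I_\alpha$ as $\{x : v_j(x)\geq \lceil \alpha\,v_j(I)\rceil\text{ for all }j\}$, and then takes $e=\mathrm{lcm}_j\, v_j(I)$ so that rounding $\alpha$ up to a multiple of $1/e$ leaves each $\lceil \alpha\,v_j(I)\rceil$ unchanged. Your numerical verification of $\lceil \alpha e/k_j\rceil=\lceil \lceil \alpha e\rceil/k_j\rceil$ is clean and correct. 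The only caveat worth flagging is the non-domain case: the Rees valuations live on the various $R/\p$ for $\p\in\Min R$, so ``$v_j(x)\in\Z$'' should be read as ``$v_j(\overline{x})\in\Z\cup\{\infty\}$''; this does not disturb the argument.
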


We can use Theorem~\ref{thm - discrete rational powers} to give an alternate description of $I_\alpha$ which simultaneously relates to the $s$-closure and doesn't depend on the representation of $\alpha$.

\begin{lemma}\label{lem - rational power description} Let $R$ be a ring of any characteristic, $I$ an ideal of positive height, and $\alpha\in \Q_{\geq 0}$.  \begin{enumerate}
\item \label{rational to ceiling}If $x\in I_\alpha$, then there exists $c\in R^\circ$ such that for all $n\gg\N$, $cx^n\in I^{\lceil \alpha n\rceil}$.
\item \label{ceiling to rational}If there exists $c\in R^\circ$ and $m\in \N_{>0}$ such that for infinitely many $n$, $cx^{m^n}\in I^{\lceil \alpha m^n\rceil}$, then $x\in I_\alpha$.
\end{enumerate}
\end{lemma}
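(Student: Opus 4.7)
The plan is to derive both parts directly from the definitions $x \in I_\alpha \iff x^b \in \overline{I^a}$ (when $\alpha = a/b$) together with the integral-closure criterion recalled earlier in the text ($y \in \overline{J}$ iff there exists $c \in R^\circ$ with $cy^n \in J^n$ for infinitely many, equivalently all sufficiently large, $n$). The case $\alpha = 0$ is immediate, so I assume $\alpha > 0$ throughout and write $\alpha = a/b$ with $a, b \in \N_{>0}$.

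For part (1), I would unpack $x \in I_\alpha$ as: there exists $c_0 \in R^\circ$ with $c_0 x^{bn} \in I^{an}$ for all $n \gg 0$. For a general large $N$, write $N = bn + r$ with $0 \leq r < b$; then $c_0 x^N = (c_0 x^{bn})\, x^r \in I^{an}$. The target exponent is $\lceil \alpha N\rceil = an + \lceil ar/b\rceil \leq a(n+1)$, so I need to ``boost'' the power of $I$ by at most $a$. This is where the positive-height hypothesis enters: by prime avoidance there is $g \in I \cap R^\circ$, and setting $c = c_0 g^a \in R^\circ$ yields $cx^N \in g^a I^{an} \subseteq I^{a(n+1)} \subseteq I^{\lceil \alpha N\rceil}$, as required.

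For part (2), raising the hypothesis to the $b$th power gives $(cx^{m^n})^b = c^b (x^b)^{m^n} \in I^{b\lceil am^n/b\rceil} \subseteq I^{am^n} = (I^a)^{m^n}$, using the elementary estimate $b\lceil y\rceil \geq by$. Since $\{m^n : n \in \N\}$ is an infinite set of positive integers (the meaningful case being $m \geq 2$, which is precisely the setting where the lemma is to be applied, namely $m$ a power of the characteristic), the integral-closure criterion yields $x^b \in \overline{I^a}$, i.e., $x \in I_\alpha$.

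The main obstacle is the boost step in part (1): translating integral-closure information at exponents of the form $bn$ into the analogous statement at arbitrary large $N$. The slack $\lceil \alpha N\rceil - an$ can be as large as $a$, and cannot be absorbed by multiplying by a power of $x$ alone, since $x$ need not lie in $I$; producing the additional multiplier $g^a \in I^a \cap R^\circ$ is the essential use of the positive-height hypothesis on $I$. Notably, Theorem~\ref{thm - discrete rational powers} is not needed for either direction.
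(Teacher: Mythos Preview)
Your argument for part~(1) is essentially the paper's: both perform division of the exponent by $b$ and absorb the shortfall (at most $a$) by multiplying by an element of $I^a\cap R^\circ$, which exists precisely because $I$ has positive height.

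For part~(2), however, you take a genuinely different and more elementary route. The paper invokes Theorem~\ref{thm - discrete rational powers} to produce an $e$ with $I_\beta=I_{\lceil\beta e\rceil/e}$, then approximates $\alpha$ from below by a fraction $a/m^k$ with $m$-power denominator satisfying $\lceil (a/m^k)e\rceil=\lceil\alpha e\rceil$, so that $I_{a/m^k}=I_\alpha$; it then rewrites $cx^{m^n}=c(x^{m^k})^{m^{n-k}}$ and applies the integral-closure criterion to conclude $x^{m^k}\in\overline{I^a}$. You instead raise the containment $cx^{m^n}\in I^{\lceil\alpha m^n\rceil}$ to the $b$th power, obtaining $c^b(x^b)^{m^n}\in (I^a)^{m^n}$ directly, and read off $x^b\in\overline{I^a}$ from the criterion. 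Your proof is shorter, entirely self-contained, and---as you note---does not need Theorem~\ref{thm - discrete rational powers} at all; the paper's approach, by contrast, makes visible the link to the discreteness of rational powers, which is thematically relevant elsewhere but not logically required here. Both arguments share the same mild caveat that $m=1$ yields only a single exponent $m^n=1$, so the integral-closure criterion does not apply; this is a defect of the statement rather than of either proof, and the lemma is only applied with $m=p\geq 2$.
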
 

\begin{proof} \eqref{rational to ceiling} Suppose $\alpha = \frac{a}{b}$ with $a,b\in \N$ and $x\in I_\alpha$, so that $x^b\in \overline{I^a}$.  Therefore there exists $c'\in R^\circ$ such that for all $k\gg 0$, $c'x^{bk}\in I^{ak}$.  Let $c''\in I^a\cap R^\circ$ and $n\gg 0$.  We have that
\[c'c''x^{b\lfloor n/b\rfloor}\in c''I^{a\lfloor n/b\rfloor}\subseteq I^{a\lfloor n/b\rfloor +a}\subseteq I^{\lceil na/b\rceil} = I^{\lceil n\alpha\rceil}\]  
So, setting $c=c'c''$ and noting that $x^n\in (x^{b\lfloor n/b\rfloor})$, we're done.

\eqref{ceiling to rational} Suppose that there exists $c\in R^\circ$ and $m\in \N_{>0}$ such that for infinitely many $n$, $cx^{m^n}\in I^{\lceil \alpha m^n\rceil}$.  Let $e\in \N$ such that for any $\beta\in \Q_{\geq 0}$, $I_\beta = I_{\lceil \beta e\rceil /e}$.  Choose $a,k\in \N$ such that $\frac{a}{m^k}<\alpha$ and $\left\lceil \frac{a}{m^k}e\right\rceil = \lceil \alpha e\rceil$.  Now, for infinitely many $n\geq k$, we have that
\[c\left(x^{m^k}\right)^{m^{n-k}} = cx^{m^n}\in I^{\lceil \alpha m^n\rceil} \subseteq I^{\lceil (a/m^k)m^n\rceil} = I^{am^{n-k}}.\]
Therefore, $x^{m^k}\in \overline{I^a}$, and so $x\in I_{a/m^k}=I_\alpha$.
\end{proof}

This description of the rational powers gives us another bound for the $s$-closure of an ideal.

\begin{theorem}\label{thm - rational powers to sclosure} Let $R$ be a ring of characteristic $p>0$ and $I\subseteq R$ an ideal of positive height.  If $s\geq 1$ is a rational number then $I^*+I_s\subseteq \wsc{s}{I}$.
\end{theorem}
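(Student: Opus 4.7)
The plan is to establish the two containments $I^*\subseteq\wsc{s}{I}$ and $I_s\subseteq\wsc{s}{I}$ separately, then appeal to the fact that $\wsc{s}{I}$ is an ideal to conclude that it contains the sum.

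The first containment is essentially already recorded in the paper (the chain $I^*\subseteq\wsc{s}{I}\subseteq\scl{s}{I}\subseteq\overline{I}$): any witness $c\in R^\circ$ for $x\in I^*$, which satisfies $cx^q\in I^{[q]}$ for all $q\gg 0$, automatically satisfies $cx^q\in I^{[q]}\subseteq I^{\lceil sq\rceil}+I^{[q]}=I^{(s,q)}$, so the same $c$ serves as a weak $s$-closure witness for $x$.

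For the second containment, let $x\in I_s$. Since $I$ has positive height, part \eqref{rational to ceiling} of Lemma~\ref{lem - rational power description} applies and yields $c\in R^\circ$ such that $cx^n\in I^{\lceil sn\rceil}$ for all sufficiently large $n$. Specializing $n$ to powers $q$ of $p$ gives $cx^q\in I^{\lceil sq\rceil}\subseteq I^{(s,q)}$ for all $q\gg 0$, which is exactly the condition for $x\in\wsc{s}{I}$. Since $\wsc{s}{I}$ is an ideal, we conclude $I^*+I_s\subseteq \wsc{s}{I}$.

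I do not expect a serious obstacle here: the real content sits inside Lemma~\ref{lem - rational power description}\eqref{rational to ceiling}, which translates the ``some $b$, all large $k$, $c'x^{bk}\in I^{ak}$'' shape of the rational power condition into the uniform ceiling-power statement $cx^n\in I^{\lceil sn\rceil}$. Once that bridge is in place, the proof is purely formal: restrict the index $n$ to the subsequence of prime power indices $q$, and observe that the mixed power is defined precisely so as to contain $I^{\lceil sq\rceil}$. The positive-height hypothesis enters only through the invocation of the lemma.
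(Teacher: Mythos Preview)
Your proposal is correct and follows essentially the same route as the paper: the paper simply notes that $I^*\subseteq\wsc{s}{I}$ is already known, then invokes Lemma~\ref{lem - rational power description}\eqref{rational to ceiling} to obtain $c\in R^\circ$ with $cx^q\in I^{\lceil sq\rceil}\subseteq I^{(s,q)}$ for $x\in I_s$. Your version is slightly more explicit about the $I^*$ containment and the specialization from $n$ to $q$, but the argument is the same.
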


\begin{proof} That $I^*\subseteq \wsc{s}{I}$ is already known.  Suppose $x\in I_s$.  By Lemma~\ref{lem - rational power description}, there exists $c\in R^\circ$ such that for all $q$, $cx^q\in I^{\lceil sq\rceil}\subseteq I^{(s,q)}$.  Therefore $x\in \wsc{s}{I}$.
\end{proof}

\section{Graded Rings}\label{sec - graded rings}

Here we establish the essential facts about the $s$-closure of homogeneous ideals in graded rings.  Throughout this section, for a semigroup $G$, a $G$-graded ring $R=\bigoplus_{g\in G} R_g$, and $x\in R$,  we write $x_g$ for the homogeneous component of $x$ lying in $R_g$.  If $I\subseteq R$ is an ideal, we write $[I]_g$ for $I\cap R_g$.  If $g=(g_1,g_2,\ldots, g_n)\in \Z^n$, then we write $\|g\|_\infty=\max\{|g_i|\mid i=1,\ldots, n\}$.
We begin with an expected result.

 \begin{theorem}  \label{thm-graded sclosure} If $R$ is a $\Z^n$-graded ring of characteristic $p>0$, $I$ is a homogeneous ideal of $R$, and $s\geq 1$, then $\wsc{s}{I}$ and $\scl{s}{I}$ are homogeneous ideals.  Furthermore, if $R$ is a domain and $x\in \wsc{s}{I}$, there exists a homogeneous element $c$ such that $cx^q\in I^{(s,q)}$ for all $q\gg 0$.
\end{theorem}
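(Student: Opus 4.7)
The plan is to exploit the Frobenius identity $(a+b)^q = a^q + b^q$ in characteristic $p$ to split $x^q$ along the grading, and then to use the homogeneity of each mixed power $I^{(s,q)}$ to harvest the individual pieces. Fix $x \in \wsc{s}{I}$ and a witness $c \in R^\circ$ such that $cx^q \in I^{(s,q)}$ for all $q \gg 0$. Decompose $x = \sum_i x_{g_i}$ and $c = \sum_j c_{h_j}$ into their (finite) sets of homogeneous components. Since $q$ is a power of $p$, $x^q = \sum_i x_{g_i}^q$, so $c\,x^q = \sum_{i,j} c_{h_j}\,x_{g_i}^q$, where the $(i,j)$ summand is homogeneous of $\Z^n$-degree $h_j + q g_i$.

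Next, I verify that for $q$ sufficiently large the degrees $h_j + q g_i$ are pairwise distinct. A collision $h_j + q g_i = h_{j'} + q g_{i'}$ with $(i,j) \neq (i',j')$ forces either $g_i = g_{i'}$ and $h_j = h_{j'}$ (impossible), or $q\,(g_i - g_{i'}) = h_{j'} - h_j$ with $g_i - g_{i'} \neq 0$. Reading any coordinate in which $g_i - g_{i'}$ is nonzero pins $q$ down to at most one value, so only finitely many $q$ can cause a collision among the finitely many pairs. For $q$ beyond that threshold, each $c_{h_j} x_{g_i}^q$ is a distinct homogeneous component of $cx^q$, and since $I^{(s,q)}$ is a homogeneous ideal (as $I$ is), each of these components already lies in $I^{(s,q)}$. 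Summing over $j$ gives $cx_{g_i}^q = \sum_j c_{h_j} x_{g_i}^q \in I^{(s,q)}$ for each $i$ and all $q \gg 0$; since $c \in R^\circ$, this places every $x_{g_i}$ in $\wsc{s}{I}$, proving that $\wsc{s}{I}$ is homogeneous. Iterating along the defining chain of the $s$-closure then gives that $\scl{s}{I}$ is homogeneous as well.

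For the domain case, $R^\circ = R\setminus\{0\}$, so every nonzero homogeneous component $c_{h_j}$ of $c$ already lies in $R^\circ$. The containments $c_{h_j} x_{g_i}^q \in I^{(s,q)}$ established above, now summed over $i$, give $c_{h_j} x^q = \sum_i c_{h_j} x_{g_i}^q \in I^{(s,q)}$ for $q \gg 0$, exhibiting $c_{h_j}$ as the desired homogeneous multiplier. The one genuinely technical step is the degree-separation argument in the second paragraph; everything else is short bookkeeping once we know the summands $c_{h_j} x_{g_i}^q$ do not collapse together as $q$ grows. This separation is precisely where the $\Z^n$-grading hypothesis (as opposed to grading by an arbitrary monoid) enters.
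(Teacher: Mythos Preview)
Your proof is correct and follows essentially the same approach as the paper: split $cx^q$ as $\sum_{i,j} c_{h_j} x_{g_i}^q$ via the Frobenius additivity, argue that for $q$ large the degrees $h_j + qg_i$ are pairwise distinct so each term is itself a homogeneous component of an element of the homogeneous ideal $I^{(s,q)}$, and then sum over $j$ (respectively over $i$ in the domain case) to recover the desired containments. The paper phrases the separation bound as $q > \max\{\|h_j - h_{j'}\|_\infty\}$ while you argue coordinatewise, but these are the same observation.
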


\begin{proof} Let $x=\sum_{j\in \Z^n}x_j \in \wsc{s}{I}$ and $c=\sum_{i\in \Z^n} c_i \in R^\circ$ such that $cx^q\in I^{(s,q)}$ for all $q\gg 0$.  
Let $i^*=\max\{\|i-i'\|_\infty\mid c_i,c_{i'}\neq 0\}$.
If  $c_ix_j^q\neq 0$ and $c_{i'}x_{j'}^q\neq 0$ have the same degree, then $i+qj=i'+qj'$, and so $i-i'=q(j'-j)$.  If in addition $q>i^*$, we must have that $j=j'$ and $i=i'$.  Therefore each  nonzero homogeneous component of $cx^q$ is $c_ix_j^q$ for some $i,j$.  Since $I$ is homogeneous, so is  $I^{(s,q)}$, and therefore for $q\gg 0$, we have that $c_ix_j^q\in I^{(s,q)}$.  Hence, for each $j$, $cx_j^q\in I^{(s,q)}$, which shows that each $x_j\in \wsc{s}{I}$. If $R$ is a domain, then for each nonzero $c_i$, $c_ix^q\in  I^{(s,q)}$ for $q\gg 0$, which shows the last statement.

Since $\wsc{s}{I}$ is homogeneous, so is $\wsc{s}{\left(\wsc{s}{I}\right)}$, and each time we take the weak $s$-closure we preserve homogeneity.  After a finite number of steps we will reach $\scl{s}{I}$, which shows that $\scl{s}{I}$ is homogeneous.
\end{proof}

Our primary goal will be to establish necessary and sufficient degree conditions for a homogeneous ring element to belong to the $s$-closure of an ideal based on the degrees of its generators.

 \begin{theorem} \label{thm - min degree} Let $I$ be a homogeneous ideal in an $\N^n$-graded ring $R$.   If  $x\in \wsc{s}{I}\setminus I^*$ is a homogeneous element, then $\deg x\geq s\delta$, where $\delta=(\delta_1,\ldots, \delta_n)\in \N^n$ and $\delta_i=\min\{d_i\mid \deg f = (d_1,\ldots, d_n), 0\neq f\in I\}$.
\end{theorem}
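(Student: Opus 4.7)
The plan is to prove the contrapositive: assume $x \in \wsc{s}{I}$ is homogeneous with $(\deg x)_i < s\delta_i$ in some coordinate $i$, and deduce $x \in I^*$. Fix a witness $c \in R^\circ$ with $cx^q \in I^{\lceil sq\rceil} + I^{[q]}$ for all $q \gg 0$.

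The first step is to decompose $c = \sum_j c_j$ into its finitely many nonzero homogeneous components. Because $I^{\lceil sq\rceil}$ and $I^{[q]}$ are both homogeneous (a product, respectively a Frobenius power, of a homogeneous ideal), extracting the homogeneous component of degree $\deg c_j + q\deg x$ from any decomposition $cx^q = a_q + b_q$ with $a_q \in I^{\lceil sq\rceil}$ and $b_q \in I^{[q]}$ produces, for each $j$, a homogeneous decomposition $c_j x^q = a_{q,j} + b_{q,j}$ with $a_{q,j} \in I^{\lceil sq\rceil}$ and $b_{q,j} \in I^{[q]}$.

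The key degree estimate is that a nonzero homogeneous element of $I^{\lceil sq\rceil}$ can be written as a sum of products of $\lceil sq\rceil$ nonzero homogeneous elements of $I$, so its $i$-th degree coordinate is at least $\lceil sq\rceil \delta_i$. Applied to a nonzero $a_{q,j}$, this yields $(\deg c_j)_i + q(\deg x)_i \geq \lceil sq\rceil \delta_i \geq sq\delta_i$, forcing $(\deg c_j)_i \geq q\bigl(s\delta_i - (\deg x)_i\bigr)$. Under the hypothesis $(\deg x)_i < s\delta_i$, this inequality fails once $q$ exceeds $(\deg c_j)_i / (s\delta_i - (\deg x)_i)$, so for all sufficiently large $q$ (uniformly in the finitely many $j$), $a_{q,j} = 0$.

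Therefore $c_j x^q = b_{q,j} \in I^{[q]}$ for every $j$, hence $cx^q = \sum_j c_j x^q \in I^{[q]}$ for all $q \gg 0$, giving $x \in I^*$. The main potential obstacle is that Theorem~\ref{thm-graded sclosure} supplies a homogeneous witness $c$ only in the domain case, whereas here $R$ is an arbitrary $\N^n$-graded ring, so we cannot assume $c$ itself is homogeneous. The trick of summing the containments $c_j x^q \in I^{[q]}$ back together bypasses this issue cleanly and avoids any appeal to homogeneity of the minimal primes of $R$.
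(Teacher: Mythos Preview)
Your proof is correct and follows the same strategy as the paper's: argue by contrapositive, use the degree bound $(\deg y)_i \geq \lceil sq\rceil\,\delta_i$ for any nonzero homogeneous $y \in I^{\lceil sq\rceil}$, and conclude that the $I^{\lceil sq\rceil}$-component of a homogeneous decomposition of $cx^q$ must vanish for large $q$, forcing $cx^q \in I^{[q]}$.

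The one difference is exactly the point you flag. The paper's proof simply asserts ``let $c\in R^\circ$ be homogeneous,'' a claim that Theorem~\ref{thm-graded sclosure} justifies only when $R$ is a domain. You instead decompose $c=\sum_j c_j$ into its homogeneous pieces, show $c_j x^q \in I^{[q]}$ for each $j$ (where it does not matter whether $c_j\in R^\circ$), and then sum to recover $cx^q\in I^{[q]}$ with the original $c\in R^\circ$. This is a clean way around the obstacle and makes the argument self-contained for arbitrary $\N^n$-graded $R$; the paper's version is shorter but, read literally, relies on a homogeneity claim about the witness that was only established in the domain case.
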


\begin{proof}  Let $x\in \wsc{s}{I}$, $\deg x = (m_1,\ldots, m_n)$, and assume that $\deg x\not\geq  s\delta$.  Let $c\in R^\circ$ be homogeneous such that for all $q\gg0$, $cx^q\in I^{(s,q)}$.  For any such $q$, there exist homogeneous $y_q\in I^{\lceil sq\rceil}$ and $z_q\in I^{[q]}$, each of the same degree as $cx^q$, such that $cx^q=y_q+z_q$.  Since $y_q\in I^{\lceil sq\rceil}$,  if $y_q\neq 0$ then $\deg y_q\geq \delta\lceil sq\rceil$, and since $\deg x \not\geq s\delta$, for large enough $q$ we have that $\deg (cx^q)=q\cdot\deg x+\deg c \not\geq\delta\lceil sq\rceil $.  Therefore $y_q=0$ and $cx^q\in I^{[q]}$ for all $q\gg 0$, and hence $x\in I^*$.
\end{proof}

When the ideal we consider is primary to the homogeneous maximal ideal, we may conclude that all elements above a certain degree are in $\wsc{s}{I}$.

\begin{theorem} \label{thm - elements forced in} Let $k$ be a field of characteristic $p>0$, $(R,\m)$ an  $\N^n$-graded local finitely generated $k$-algebra, $I$ an $\m$-primary homogeneous ideal generated in degree at most $\delta$, and $s\geq 1$.  If $x\in R$ is a homogeneous element, $\deg x \neq (0,\ldots,0)$, and $\deg x \geq s\delta$, then $x\in \wsc{s}{I}$.
\end{theorem}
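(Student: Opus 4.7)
The plan is to produce a homogeneous $c\in R^{\circ}$ whose degree is large enough that $\deg(cx^q)$ exceeds the top nonzero degree of the Artinian quotient $R/I^{\lceil sq\rceil}$ uniformly in $q$, forcing $cx^q\in I^{\lceil sq\rceil}\subseteq I^{(s,q)}$ by a pure degree count.

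First I would reduce to the $\N$-graded case by coarsening to the total-degree grading; the componentwise hypotheses pass to the coarsened version with $\delta$ replaced by $|\delta|$.  I would also assume $k$ is infinite (by extending and descending via faithful flatness at the end) and that $\dim R\geq 1$, since the case $\dim R=0$ is immediate because $R$ itself is Artinian.  The main intermediate result to prove is the lemma: there exists a constant $U$, depending only on $R$ and $I$ and not on $m$, such that every homogeneous element of $R$ of degree greater than $(m-1)\delta+U$ lies in $I^m$.

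To prove the lemma I would apply graded Noether normalization to the fiber cone $F(I)=\bigoplus_n I^n/\m I^n$: because $I$ is $\m$-primary, $\dim F(I)=\dim R=d$, and because $I$ is generated in degrees at most $\delta$, the piece $F(I)_1=I/\m I$ is supported in $R$-degrees at most $\delta$.  With $k$ infinite a homogeneous system of parameters for $F(I)$ may be chosen inside $F(I)_{*,1}$ and lifted to $\theta_1,\ldots,\theta_d\in I$ with $\deg\theta_j\leq\delta$.  Then $R$ becomes a finite graded module over the polynomial subring $A=k[\theta_1,\ldots,\theta_d]$, with homogeneous generators $u_1,\ldots,u_e$ of degrees at most some $U$.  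A $k$-basis of $A/\m_A^m$ consists of the monomials $\{\theta^a:\sum a_i<m\}$, each of degree at most $(m-1)\delta$, so $R/J^m=R\otimes_A A/\m_A^m$ has top nonzero degree at most $(m-1)\delta+U$; the same bound passes to $R/I^m$ since $J=(\theta_1,\ldots,\theta_d)\subseteq I$.

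Applying the lemma with $m=\lceil sq\rceil$, every homogeneous element of degree greater than $(\lceil sq\rceil-1)\delta+U$ lies in $I^{\lceil sq\rceil}\subseteq I^{(s,q)}$.  Choose any homogeneous $c\in R^{\circ}$ with $\deg c>U$; such $c$ exists because, for $k$ infinite and $\dim R\geq 1$, every minimal prime $\p$ of $R$ satisfies $\dim R/\p\geq 1$, and so homogeneous elements avoiding all minimal primes exist in arbitrarily large degrees.  Then $\deg(cx^q)=\deg c+q\deg x>U+sq\delta\geq(\lceil sq\rceil-1)\delta+U$ for every $q\geq 1$, so $cx^q\in I^{(s,q)}$, showing $x\in\wsc{s}{I}$.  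The chief obstacle is the existence of the system of parameters $\theta_j\in I$ with $\deg\theta_j\leq\delta$ when the generators of $I$ lie in several distinct degrees; the fiber cone argument is precisely what is needed to handle this, and the finishing degree count is then routine.
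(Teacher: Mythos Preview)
Your overall strategy---bounding the top nonzero degree of $R/I^m$ linearly in $m$ and then choosing $c\in R^\circ$ of large degree so that $cx^q\in I^{\lceil sq\rceil}$ by a pure degree count---is sound and would work, and it is genuinely different from the paper's route. The paper does not bound $R/I^m$ at all; instead it uses the integrality of $R$ over $k[f_1,\ldots,f_m]$ (all the homogeneous generators of $I$) to write down an equation of integral dependence for $x^{\delta}$, reads off from the degrees that the coefficients lie in $I^{\Delta i}$ with $\Delta=\deg x$, and concludes $x\in I_{\Delta/\delta}\subseteq \wsc{s}{I}$ via Theorem~\ref{thm - rational powers to sclosure}. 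The paper's version yields the slightly sharper statement $x\in I_{\Delta/\delta}$ and requires no passage to infinite $k$.

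There is, however, a real gap in your proof of the key lemma. A system of parameters for $F(I)$ chosen in $F(I)_{*,1}=I/\m I$ lifts to elements of $I$, but when the generators $f_i$ sit in several different $R$-degrees these lifts are linear combinations of the $f_i$ and hence need \emph{not} be $R$-homogeneous. If the $\theta_j$ are not $R$-homogeneous then $A=k[\theta_1,\ldots,\theta_d]$ is not a graded subring of $R$, and your top-degree computation for $A/\m_A^m$ and for the module generators $u_i$ no longer makes sense. Claiming instead that a \emph{bihomogeneous} system of parameters exists for $F(I)$ is precisely the nontrivial point you flagged as ``the chief obstacle,'' and the fiber cone by itself does not deliver it. The clean fix is to drop the sop entirely and take $A=k[f_1,\ldots,f_m]$ with all the generators: since $R_0$ is a finitely generated local $k$-algebra it is Artinian, hence $R/I$ is finite over $k$, so $R$ is a finite graded $A$-module with homogeneous generators of degree at most some $U$; then any monomial $f^a$ of $R$-degree $>(m-1)\delta$ has $\sum a_i\geq m$, and your bound $[R/I^m]_n=0$ for $n>(m-1)\delta+U$ follows. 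Note that this integrality step is exactly what the paper uses---you would simply be exploiting it for a different conclusion.
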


\begin{proof}  If $\Ht I=0$, then since $I$ is $\m$-primary, $R$ is a dimension 0 local ring.  In this case, all elements of $\m$ are nilpotent, and so since $\deg x\neq(0,\ldots,0)$, $x$ is nilpotent and hence in $\wsc{s}{I}$.  Therefore, we may assume that $\Ht I>0$.

 We reduce to the case that $R$ is $\N$-graded by ``flattening'' the grading on $R$.  Precisely, for $m\in \N$, let $R_m=\bigoplus_{|g|_1=m} R_g$, where the sum is taken over all degrees in the original grading whose sum of coordinates is equal to $m$.  Under this new grading, we still have that $\deg x\geq s\delta$, so we may assume $R$ is $\N$-graded.

Suppose that $\delta>0$.  Let $\Delta=\deg x\geq s\delta$, and let $f_1,\ldots, f_m$ be a set of homogeneous generators of $I$ with $\deg f_i\leq \delta$.  Since $I$ is $\m$-primary, we have that $k[f_1,\ldots, f_m]\subseteq R$ is integral, and so there exists an equation of integral dependence for $x^\delta$ over $k[f_1,\ldots, f_m]$:
 \begin{equation}\label{eqn of integral dependence}
\left(x^\delta\right)^N + a_1\left(x^\delta\right)^{N-1}+\cdots + a_{N-1}x^\delta +a_N=0.\end{equation}
we may choose the $a_i$ homogeneous, and so $\deg a_i = \Delta\delta i $ for each $i$.  Since each $a_i$ is a polynomial in the $f_i$, we have that $a_i\in I^{\Delta i}$ for all $i$.  Therefore $x^\delta\in \overline{I^{\Delta}}$, and so $x\in I_{\Delta/\delta}$. By Theorem~\ref{thm - rational powers to sclosure}, $x\in \wsc{\Delta/\delta}{I}\subseteq \wsc{s}{I}$.

Now suppose that $\delta = 0$, so that $I$ is generated by its degree 0 piece $I\cap R_0$.  In this case, for all $n\in \N$ we have that $I\cap R_n = (I\cap R_0)R_n$.  Since $I$ is $\m$-primary,
\[\infty > \lengthR{R}{R/I} = \sum_{n\in \N} \lengthR{R_0}{ \frac{R_n}{I\cap R_n}}
= \sum_{n\in \N} \lengthR{R_0}{\frac{ R_n}{(I\cap R_0)R_n}}
\]
Therefore, there exists $N\in\N$ such that if  $n\geq N$, $(I\cap R_0)R_n=R_n$, and by Nakayama's Lemma, $R_n=0$.  Hence any homogeneous $x\in R$ with $\deg x\geq 1$ is nilpotent, and so $x\in \wsc{s}{I}$. 
\end{proof}

For certain kinds of ideals in graded rings we can describe the $s$-closure completely in terms of rational powers.

\begin{theorem}\label{thm - sclosure is mixed power criterion}Let $R$ be a ring of characteristic $p>0$ which is $G$-graded for some semigroup $G$, $I\subseteq R$ a homogeneous ideal of positive height, and $s\geq 1$ rational.  If, for every $q\gg 0$ and $g\in G$, either $[I^{\lceil sq\rceil}]_g\subseteq [I^{[q]}]_g$ or $[I^{[q]}]_g\subseteq [I^{\lceil sq\rceil}]_g$, then $\wsc{s}{I}=I^*+I_s$.
\end{theorem}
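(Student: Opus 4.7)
The plan is to prove the nontrivial inclusion $\wsc{s}{I}\subseteq I^*+I_s$, since the reverse is already Theorem~\ref{thm - rational powers to sclosure} (together with the general containment $I^*\subseteq\wsc{s}{I}$). By Theorem~\ref{thm-graded sclosure}, $\wsc{s}{I}$ is homogeneous, and in the graded setting $I^*$ and $I_s$ are also homogeneous, so I would reduce to showing that an arbitrary homogeneous $x\in\wsc{s}{I}$ of some degree $g_0\in G$ lies in $I^*+I_s$.

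Fix such an $x$ together with $c\in R^\circ$ such that $cx^q\in I^{(s,q)}$ for all $q\gg 0$, and write $c=\sum_{i\in S}c_i$ for its finitely many nonzero homogeneous components. Since $I^{\lceil sq\rceil}$ and $I^{[q]}$ are both homogeneous, the component of $cx^q$ in degree $i+qg_0$ equals $c_ix^q$ and must lie in $[I^{(s,q)}]_{i+qg_0}$. By hypothesis, for $q\gg 0$ this graded piece coincides with whichever of $[I^{\lceil sq\rceil}]_{i+qg_0}$ or $[I^{[q]}]_{i+qg_0}$ is larger, so $c_ix^q$ sits inside $I^{\lceil sq\rceil}$ or inside $I^{[q]}$. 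Recording the choice for each $i$ and each $q$ defines a function $\sigma_q\colon S\to\{A,B\}$, and since $\{A,B\}^S$ is finite, pigeonhole produces an infinite set $Q$ of powers of $p$ together with a single $\sigma$ satisfying $\sigma_q=\sigma$ for all $q\in Q$. Putting $c^{(A)}=\sum_{\sigma(i)=A}c_i$ and $c^{(B)}=\sum_{\sigma(i)=B}c_i$ yields a $q$-independent decomposition $c=c^{(A)}+c^{(B)}$ for which $c^{(A)}x^q\in I^{\lceil sq\rceil}$ and $c^{(B)}x^q\in I^{[q]}$ hold simultaneously for every $q\in Q$.

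I would finish by reducing to the domain case via minimal primes. Since $c=c^{(A)}+c^{(B)}\in R^\circ$, for every $\p\in\Min R$ at least one of $c^{(A)},c^{(B)}$ survives in the domain $R/\p$. If the image of $c^{(A)}$ is nonzero there, then Lemma~\ref{lem - rational power description}\eqref{ceiling to rational} applied in $R/\p$ with $m=p$ and the infinite set $Q$ gives $\overline{x}\in(IR/\p)_s$; if the image of $c^{(B)}$ is nonzero, then the usual ``infinitely many $q$'' reformulation of tight closure in the domain $R/\p$ gives $\overline{x}\in(IR/\p)^*$. In either case $\overline{x}\in (IR/\p)^*+(IR/\p)_s$ for every $\p\in\Min R$, and I would combine this with Theorem~\ref{thm - all ideals have MP} and the compatibility of $I^*$ and $I_s$ with passage to the quotient by a minimal prime to conclude $x\in I^*+I_s$ in $R$.

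The main obstacle is this final descent: pigeonhole only delivers a fixed decomposition of $c$ on an infinite but possibly not cofinite subset of $q$, and neither $c^{(A)}$ nor $c^{(B)}$ is a priori in $R^\circ$, so one cannot work directly in $R$ but must pass to each $R/\p$, handle the $c^{(B)}$-case via the domain-friendly form of tight closure, and then lift back—a step whose validity depends on the good behavior of both $I^*$ and rational powers under reduction modulo a minimal prime.
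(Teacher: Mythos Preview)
Your route diverges from the paper's at the handling of $c$. The paper simply takes $c\in R^\circ$ to be \emph{homogeneous} (as Theorem~\ref{thm-graded sclosure} supplies, at least when $R$ is a domain). Then $cx^q$ is itself homogeneous, so the hypothesis on graded pieces forces $cx^q\in I^{\lceil sq\rceil}$ or $cx^q\in I^{[q]}$ outright. If $x\notin I^*$ then $cx^q\notin I^{[q]}$ for infinitely many $q$, hence $cx^q\in I^{\lceil sq\rceil}$ for those $q$, and Lemma~\ref{lem - rational power description}\eqref{ceiling to rational} gives $x\in I_s$ directly in $R$. No pigeonhole, no reduction modulo minimal primes, no descent.

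The obstacle you flag in your last paragraph is a genuine gap rather than a technicality. From ``for each $\p\in\Min R$, $\overline{x}\in(IR/\p)^*$ or $\overline{x}\in(IR/\p)_s$'' you cannot deduce $x\in I^*+I_s$. Theorem~\ref{thm - all ideals have MP} characterizes membership in $\wsc{s}{I}$, which you already know, so invoking it here is circular. What you would actually need is the implication
\[
\bigl(\forall\,\p\in\Min R:\ \overline{x}\in(IR/\p)^*+(IR/\p)_s\bigr)\ \Longrightarrow\ x\in I^*+I_s,
\]
and this is not available: the minimal-primes descriptions of $I^*$ and of $I_s$ each require membership modulo \emph{every} minimal prime, whereas your pigeonholed split $c=c^{(A)}+c^{(B)}$ may place $\overline{x}$ in $(IR/\p)^*$ for some $\p$ and in $(IR/\p)_s$ for others, with neither $c^{(A)}$ nor $c^{(B)}$ lying in $R^\circ$. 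There is no mechanism to glue these prime-by-prime conclusions into a single decomposition in $I^*+I_s$. (A secondary issue: the ``infinitely many $q$'' criterion you invoke for tight closure in $R/\p$ is not the definition and is not known to be equivalent to it without extra hypotheses such as the existence of test elements.) The paper's device of choosing $c$ homogeneous is exactly what sidesteps the whole descent problem: the dichotomy ``$x\in I^*$ or $x\in I_s$'' is obtained globally in $R$, not prime by prime.
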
  

\begin{proof}  By Theorem~\ref{thm - rational powers to sclosure}, $I^*+I_s\subseteq \wsc{s}{I}$.  Let $x\in \wsc{s}{I}$ be homogeneous, and let $c\in R^\circ$ be homogeneous such that for all $q\gg 0$, $cx^q\in I^{(s,q)}=I^{\lceil sq\rceil}+I^{[q]}$.  For all sufficiently large $q$, since $cx^q$ is a homogeneous element, we have that $cx^q\in I^{\lceil sq\rceil}$ or $cx^q\in I^{[q]}$. 
If $x\notin I^*$, then for infinitely many $q$, $cx^q\notin I^{[q]}$.  Hence for infinitely many $q$, $cx^q\in I^{\lceil sq\rceil}$.  By Lemma~\ref{lem - rational power description}, $x\in I_s$.   Hence $x$ is in either $I^*$ or $I_s$, so $x\in I^*+I_s$.
\end{proof}

Situations where we might apply Theorem~\ref{thm - sclosure is mixed power criterion} include homogeneous ideals in rings with monomial-like gradings, in which  each graded piece is a 1-dimensional vector space over $k$. Examples of these include monomial ideals in polynomial rings and toric rings.

\section{When is The Weak $s$-closure a Closure?}\label{sec - ideal conditions}

In this section we consider a collection of conditions an ideal $I$ may have relating to its various $s$-closures.  In particular, we are concerned with when the $s$-closure is an honest closure itself, a property we refer to as $(ID_s)$ below.  Before defining the properties, we look at an example to show that, \emph{a priori}, there may be infinitely many distinct $s$-closures for a given ideal.  More precisely,
this example shows that it is possible for the quotient of two $s$-closures to have infinite length.  The example is based on \cite[Example 2.2]{Epstein-Yao-ExtensionsHK}

\begin{example}  \label{example - infinite length between s-closures} Let $R$ be a domain of characteristic $p>0$, $I\subseteq R$ an ideal, and $s>t\geq 1$ such that $\wsc{s}{I}\neq \wsc{t}{I}$.  Let $S=R[X]$, $J=IS$, and $z\in \wsc{t}{I}\setminus \wsc{s}{I}$.  We have that
\[\frac{\wsc{t}{J}}{\wsc{s}{J}}\supseteq \frac{\wsc{s}{J}+Sz}{\wsc{s}{J}}\cong \frac{Sz}{\wsc{s}{J}\cap Sz}\cong \frac{S}{\left(\wsc{s}{J}:_S z\right)}\]
We claim that $zX^n\notin \wsc{s}{J}$ for any $n\in \N$.  If there were such an $n$, then since $R[X]$ is naturally $\N$-graded, there would be an element $rX^m$ for some $0\neq r\in R$ and $m\in\N$ such that, for all sufficiently large powers $q$ of $p$,
\[rz^qX^{m+nq}=rX^m\cdot \left(zX^n\right)^q\in J^{(s,q)}=I^{(s,q)}S.\]
Therefore, for all large $q$, $rz^q\in I^{(s,q)}$, and so $z\in \wsc{s}{I}$, a contradiction.   Hence $X^n\notin \left(\wsc{s}{J}:_S z\right)$ for any $n$, and so
\[S\supseteq \left(\wsc{s}{J}:_S z\right)+(X)\supseteq \left(\wsc{s}{J}:_S z\right)+(X^2)\supseteq  \left(\wsc{s}{J}:_S z\right)+(X^3)\supseteq \cdots\]
is an infinite chain of descending ideals each of which contain $  \left(\wsc{s}{J}:_S z\right)$.  Therefore $\lambda\left(S/ \left(\wsc{s}{J}:_S z\right)\right)=\infty$, and so $\lambda\left(\wsc{t}{J}/\wsc{s}{J}\right)=\infty$.
\end{example}

\subsection*{Property $(ID_s)$: When Weak $s$-Closure is Equal to $s$-Closure}

As given in Definition \ref{def - s-closure}, the weak $s$-closure is not obviously a closure.   We now consider classes of ideals for which the two notions align.

\begin{definition}  Let $R$ be a ring of characteristic $p>0$ and $s\geq1$ a real number.  We say an ideal $I$ of $R$ has property $(ID_s)$ if the weak $s$-closure is idempotent on $I$, i.e.\ $\wsc{s}{I}=\scl{s}{I}$.  We say the ring $R$ has property $(ID_s)$ if every ideal of $R$ has property $(ID_s)$.
\end{definition}

Since weak $1$-closure is integral closure, any ring $R$ of positive characteristic has property $(ID_1)$.  If, further, $(R,\m)$ is local with infinite residue field, then $R$ has property $(ID_s)$ for any $s>\dim R$, since in this case weak $s$-closure is tight closure.

\subsection*{Property $(SM_s)$: When $s$-Closure is Characterized by $s$-Multiplicity}

In this section we restrict our attention to ideals of finite colength in local or graded local rings, homogeneous in the graded local case.  Membership in such ideals' tight or integral closure can be tested using the Hilbert-Kunz or Hilbert-Samuel multiplicity, under certain conditions.  The analogous  multiplicity function for $s$-closure is called $s$-multiplicity.

\begin{definition}\cite[Definition 1.3]{Taylor-Interpolating} Let $(R,\m)$ be a local (resp.\ graded local) ring of characteristic $p>0$ and dimension $d$, $I\subseteq R$ an $\m$-primary (homogeneous) ideal, and $s\geq 1$.  The \emph{$s$-multiplicity} of $I$ is 
\[e_s(I)=\lim_{q\to \infty}\frac{\lambda\left(R/I^{(s,q)}\right)}{q^d\mathcal{H}_s(d)},\]
where $\mathcal{H}_s(d) = \mathrm{vol} \{x\in [0,1]^d\mid |x|_1\leq s\}$.
\end{definition}

Originally, the $s$-multiplicity was defined only for local rings, but the graded local case is completely analogous.

 By \cite[Theorem 4.6]{Taylor-Interpolating}, if $x\in \wsc{s}{I}$, then $e_s(I+(x))=e_s(I)$.  When the converse also holds, the $s$-multiplicity becomes a powerful tool for studying the $s$-closure.
 
 \begin{definition}   Let $(R,\m)$ be a (graded) local ring of characteristic $p>0$ and $s\geq1$ a real number.  We say an $\m$-primary (homogeneous) ideal $I$ of $R$ has property $(SM_s)$ if one can test membership in the weak $s$-closure of $I$ using $s$-multiplicity, i.e.\ if $e_s(I+(x))=e_s(I)$ then $x\in \wsc{s}{I}$.
  We say the ring $R$ has property $(SM_s)$ if every $\m$-primary (homogeneous) ideal of $R$ has property $(SM_s)$.
\end{definition} 

Property $(SM_s)$ is stronger than property $(ID_s)$.

\begin{theorem}\label{theorem - SM to ID} Let $(R,\m)$ be a (graded) local ring of characteristic $p>0$ and $I$ a (homogeneous) $\m$-primary ideal of $R$.  If $I$ has property $(SM_s)$, then $I$ has property $(ID_s)$.
\end{theorem}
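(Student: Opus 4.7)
The plan is to show that $\scl{s}{I} \subseteq \wsc{s}{I}$ (the reverse containment is built into the definition of the $s$-closure). Fix $x \in \scl{s}{I}$; by property $(SM_s)$ applied to the ideal $I$, it is enough to verify that $e_s(I+(x)) = e_s(I)$. The $s$-multiplicity is monotone nonincreasing in its argument, because $J_1 \subseteq J_2$ forces $J_1^{(s,q)} \subseteq J_2^{(s,q)}$ and hence $\lambda(R/J_1^{(s,q)}) \geq \lambda(R/J_2^{(s,q)})$ for every $q$. From the sandwich $I \subseteq I+(x) \subseteq \scl{s}{I}$, the whole argument then reduces to establishing the equality $e_s(I) = e_s(\scl{s}{I})$.

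The key step is a lemma: for every $\m$-primary ideal $J$, $e_s(J) = e_s(\wsc{s}{J})$. To prove it, I would invoke the forward direction of \cite[Theorem~4.6]{Taylor-Interpolating} (recalled in the introduction to the $(SM_s)$ subsection), which says that $e_s(J+(y)) = e_s(J)$ whenever $y \in \wsc{s}{J}$. Using noetherianity write $\wsc{s}{J} = J + (y_1,\ldots,y_m)$ and add the generators one at a time. At the $(i+1)$-st stage, $y_{i+1} \in \wsc{s}{J} \subseteq \wsc{s}{(J+(y_1,\ldots,y_i))}$ by monotonicity of the weak $s$-closure, so the forward direction applies inductively and $e_s$ is preserved. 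After $m$ steps one obtains $e_s(\wsc{s}{J}) = e_s(J)$.

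With the lemma in hand, I would apply it along the chain $I = I_0 \subseteq I_1 \subseteq \cdots \subseteq I_n = \scl{s}{I}$, where $I_{k+1} = \wsc{s}{I_k}$ and stabilization at $\scl{s}{I}$ is guaranteed by the definition. Each $I_k$ is $\m$-primary since $I \subseteq I_k$, so the lemma yields $e_s(I_k) = e_s(I_{k+1})$ at every link, and telescoping gives $e_s(I) = e_s(\scl{s}{I})$. Combined with the monotonicity sandwich, this forces $e_s(I+(x)) = e_s(I)$, and $(SM_s)$ then places $x$ into $\wsc{s}{I}$, as required.

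The hard part is the inductive step in the intermediate lemma: one must be sure that enlarging $J$ by elements of $\wsc{s}{J}$ does not carry those elements out of the weak $s$-closure of the enlarged ideal. Monotonicity of the weak $s$-closure is exactly what licenses this, so the obstacle is really more a matter of recognizing that the iteration is legal than of overcoming a genuine technical difficulty.
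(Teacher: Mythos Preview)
Your proposal is correct and follows essentially the same strategy as the paper: show that $x\in\scl{s}{I}$ forces $e_s(I+(x))=e_s(I)$, and then invoke $(SM_s)$. The paper compresses your chain argument into a single citation of \cite[Theorem~4.6]{Taylor-Interpolating}, asserting directly that $x\in\scl{s}{I}$ implies $e_s(I+(x))=e_s(I)$; you have effectively reproved that implication from the one-element version of the forward direction by iterating along $I=I_0\subseteq I_1\subseteq\cdots\subseteq I_n=\scl{s}{I}$ and sandwiching, which is a legitimate and slightly more self-contained route to the same conclusion.
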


\begin{proof} By \cite[Theorem 4.6]{Taylor-Interpolating}, if $x\in \scl{s}{I}$, then $e_s(I+(x))=e_s(I)$, and since $I$ has property $(SM_s)$, we have that $x\in \wsc{s}{I}$.  Therefore $\wsc{s}{I}=\scl{s}{I}$.
\end{proof}

The following theorem shows that membership in the weak $s$-closure can be tested modulo minimal primes, which we will use to show that a large class of ideals has $(SM_s)$.  The proof of this result is based closely on the proof of part (e) of \cite[Proposition 10.1.2]{Bruns-Herzog-CMRings}.  In the following proof, the notation $\overline{x}$ always indicates the image of $x$ in the currently considered quotient ring

\begin{theorem}\label{thm - all ideals have MP} Let $R$ be a ring of characteristic $p>0$, $I\subseteq R$ an ideal, and $s\geq1$.  For any $x\in R$, $x\in \wsc{s}{I}$ if and only if $\overline{x}\in \wsc{s}{(IR/\p)}$ for all $\p\in \Min R$. 
\end{theorem}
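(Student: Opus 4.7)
The plan is to split into directions; the first is immediate. If $c \in R^\circ$ witnesses $x \in \wsc{s}{I}$, then for any $\p \in \Min R$ the image $\overline{c}$ is nonzero in the domain $R/\p$, hence lies in $(R/\p)^\circ$, and the containment $cx^q \in I^{(s,q)}$ descends to $\overline{c}\,\overline{x}^q \in (IR/\p)^{(s,q)}$, giving $\overline{x} \in \wsc{s}{(IR/\p)}$.

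For the converse I plan to gather local data and glue by prime avoidance. Listing $\Min R = \{\p_1, \ldots, \p_n\}$, I would choose for each $i$ a witness $c_i \in R \setminus \p_i$ and a threshold $Q_i$ so that $c_i x^q \in I^{(s,q)} + \p_i$ for $q \geq Q_i$. Prime avoidance on the minimal primes produces $d_i \in \bigl(\bigcap_{j \neq i} \p_j\bigr) \setminus \p_i$, and $c := \sum_i c_i d_i$ lies in $R^\circ$, since modulo each $\p_j$ every summand except $c_j d_j$ vanishes while $c_j d_j \not\equiv 0$. Decomposing $c_i x^q = a_{i,q} + b_{i,q}$ with $a_{i,q} \in I^{(s,q)}$ and $b_{i,q} \in \p_i$, I obtain $cx^q = A_q + U_q$ where $A_q := \sum_i d_i a_{i,q} \in I^{(s,q)}$ and $U_q := \sum_i d_i b_{i,q} \in \bigcap_j \p_j = \sqrt{0}$.

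The hard part will be removing this nilpotent error $U_q$ so that some element of $R^\circ$ genuinely witnesses $x \in \wsc{s}{I}$; I plan to exploit the characteristic-$p$ freshman's dream. Pick $N$ with $\sqrt{0}^{\,N} = 0$, and a power $N'$ of $p$ with $N' \geq N$. Then $(cx^q)^{N'} = A_q^{N'} + U_q^{N'} = A_q^{N'}$, since $U_q^{N'} \in \sqrt{0}^{\,N'} = 0$. A second application of the freshman's dream to $A_q = a + b$ with $a \in I^{\lceil sq\rceil}$ and $b \in I^{[q]}$ gives $a^{N'} \in I^{N'\lceil sq\rceil} \subseteq I^{\lceil sqN'\rceil}$ and $b^{N'} \in I^{[qN']}$, so $A_q^{N'} \in I^{(s,\,qN')}$. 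Hence $c^{N'}\,x^{qN'} \in I^{(s,\,qN')}$ for all $q \gg 0$; since $N'$ is a power of $p$, the quantities $qN'$ sweep out a cofinite set of powers of $p$ as $q$ does, and $c^{N'} \in R^\circ$ is the desired witness.
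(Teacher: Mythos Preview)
Your proof is correct and follows essentially the same approach as the paper's: both glue the local witnesses via $c=\sum_i c_i d_i$ with $d_i$ chosen by prime avoidance, then use the freshman's dream together with nilpotence of the nilradical to eliminate the error term. The only differences are organizational---you package the error as a single nilpotent $U_q$ and kill $U_q^{N'}$, while the paper treats each summand $(c_id_i)^{q'}x^{qq'}$ separately and kills the error using $(\p_1\cdots\p_m)^{[q']}=0$.
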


\begin{proof}
If $x\in \wsc{s}{I}$, then there exists $c\in R^\circ$ such that for all $q\gg 0$, $cx^q\in I^{(s,q)}$, and therefore for any minimal prime $\p$, $\overline{c}\cdot\overline{x}^q=\overline{cx^q}\in I^{(s,q)}R/\p = (IR/\p)^{(s,q)}$.  Hence $\overline{x}\in\wsc{s}{(IR/\p)}$.

Let $\p_1,\p_2\ldots, \p_m$ be the minimal primes of $R$, suppose that for every $i$, $\overline{x}\in \wsc{s}{(IR/\p_i)}$, and choose $c_i\notin \p_i$ such that $\overline{c_ix^q}\in \left(IR/\p_i\right)^{(s,q)}$ for all $q\gg 0$.  Therefore $c_ix^q\in I^{(s,q)}+\p_i$ for all $i$ and all $q\gg 0$.  We may assume that $c_i\in R^\circ$; if not, by prime avoidance we may choose $c_i'$ such that $c_i'\in \p_j$ if and only if $c_i\notin \p_j$.  Therefore $c_i+c_i'\in R^\circ$, and furthermore, since $c_i'\in\p_i$, we have that $(c_i+c'_i)x^q\in I^{(s,q)}+\p_i$ for all $q\gg 0$.  Thus we may replace $c_i$ with $c_i+c'_i$.

For each $i$, let $d_i\in \left(\prod_{j\neq i}\p_j\right)\setminus \p_i$, and let $d=\sum_i c_id_i$.  We have that $d\in R^\circ$.  Now $\p_1\p_2\cdots \p_m\subseteq \sqrt{0}$, so let $q'$ be large enough that $(\p_1\p_2\cdots \p_m)^{[q']}=0$.  For all $q\gg 0$, and for any $i$, we have that
\[(c_id_i)^{q'}x^{qq'} = \left(c_ix^q\right)^{q'}d_i^{q'}
\in \left(I^{(s,q)}+\p_i\right)^{[q']}\left(\prod_{j\neq i} \p_i^{[q']}\right)
=\left(\left(I^{(s,q)}\right)^{[q']}+\p_i^{[q']}\right)\left(\prod_{j\neq i} \p_i^{[q']}\right)\subseteq I^{(s,qq')}.
\]
Therefore, $d^{q'}x^{qq'}\in I^{(s,qq')}$ for all $q\gg 0$, and so $x\in \wsc{s}{I}$.
\end{proof}

Theorem~\ref{thm - all ideals have MP} allows us to generalize \cite[Theorem 4.6]{Taylor-Interpolating} to the non-domain case.

\begin{theorem} \label{theorem - MP to SM}  Let $(R,\m)$ be a (graded) local ring of characteristic $p>0$ and $I$ a (homogeneous) $\m$-primary ideal.  If $R$ is $F$-finite, complete, and unmixed, then $I$ has property $(SM_s)$.
\end{theorem}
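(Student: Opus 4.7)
The plan is to reduce to the $F$-finite complete domain version of \cite[Theorem 4.6]{Taylor-Interpolating} via two tools: Theorem~\ref{thm - all ideals have MP}, which reduces membership in $\wsc{s}{I}$ to membership in $\wsc{s}{(IR/\p)}$ for every $\p\in\Min R$, and an additivity formula expressing $e_s(I)$ as a weighted sum of the $s$-multiplicities $e_s(IR/\p)$. The unmixed hypothesis is exactly what ensures every minimal prime contributes to $e_s$ in the same way, just as it does for the classical Hilbert--Samuel multiplicity.

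The key preliminary step is to establish the associativity formula
\[
e_s(I)\;=\;\sum_{\p\in\Min R}\lengthR{R_\p}{R_\p}\,e_s\!\left(IR/\p\right).
\]
Following the template for Hilbert--Samuel and Hilbert--Kunz multiplicities, I would choose a prime filtration $0=M_0\subset M_1\subset\cdots\subset M_n=R$ of $R$ as an $R$-module with $M_i/M_{i-1}\cong R/\q_i$, and expand $\lengthR{R}{R/I^{(s,q)}}$ along this filtration. Since $R$ is unmixed, any $\q_i$ that is not a minimal prime satisfies $\dim R/\q_i<d$, so its contribution grows strictly slower than $q^d$ and vanishes after dividing by $q^d\mathcal{H}_s(d)$ and letting $q\to\infty$. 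Each minimal prime $\p$ appears in the filtration exactly $\lengthR{R_\p}{R_\p}$ times, which gives the displayed formula.

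Granted this, the proof of $(SM_s)$ is short. Assume $e_s(I+(x))=e_s(I)$. From $I^{(s,q)}\subseteq(I+(x))^{(s,q)}$ the $s$-multiplicity is termwise nonincreasing under this inclusion, so $e_s((I+(x))R/\p)\leq e_s(IR/\p)$ for every $\p\in\Min R$. The weights $\lengthR{R_\p}{R_\p}$ are positive integers, so equality of the weighted sums forces $e_s((I+(x))R/\p)=e_s(IR/\p)$ for each $\p\in\Min R$. For each such $\p$, $R/\p$ is an $F$-finite complete domain (these properties descend to quotients by prime ideals), so the domain case of \cite[Theorem 4.6]{Taylor-Interpolating}, applied to the inclusion $IR/\p\subseteq(I+(x))R/\p$, gives $\wsc{s}{(IR/\p)}=\wsc{s}{((I+(x))R/\p)}$, and in particular $\overline{x}\in\wsc{s}{(IR/\p)}$. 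Theorem~\ref{thm - all ideals have MP} then delivers $x\in\wsc{s}{I}$, which is property $(SM_s)$ for $I$.

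The main obstacle is the associativity formula. One must verify carefully that $\lengthR{R}{R/I^{(s,q)}}$ is $o(q^d)$ whenever $\dim R<d$, so that non-minimal primes in the filtration genuinely contribute zero, and that the short exact sequences produced by the filtration interact cleanly with the mixed powers $I^{\lceil sq\rceil}+I^{[q]}$. Both facts are expected from the Hilbert--Samuel and Hilbert--Kunz theories, but they require some bookkeeping here because the definition of $e_s$ blends ordinary and Frobenius powers. Once the associativity formula is in place, the remainder of the argument is a direct assembly of Theorem~\ref{thm - all ideals have MP} and the domain case of \cite[Theorem 4.6]{Taylor-Interpolating}.
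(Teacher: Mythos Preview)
Your approach is essentially the same as the paper's: associativity formula, termwise comparison forcing equality on each top-dimensional prime, domain case of \cite[Theorem~4.6]{Taylor-Interpolating}, then Theorem~\ref{thm - all ideals have MP}. The only difference is packaging: the paper does not re-derive the associativity formula but cites it as \cite[Corollary~3.10]{Taylor-Interpolating}, which is stated as a sum over $\Assh R$ (valid without any unmixed hypothesis), and then invokes unmixedness only at the end to identify $\Assh R=\Min R$ so that Theorem~\ref{thm - all ideals have MP} applies.
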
 

\begin{proof}  Suppose that $x\in R$ such that $e_s(I+(x))=e_s(I)$. By the Associativity Formula for $s$-multiplicity, \cite[Corollary 3.10]{Taylor-Interpolating}, we have that
\[\sum_{\p\in \Assh R} e^{R/\p}_s((I+(x))R/\p)\lengthR{R_\p}{R_\p}=e_s(I+(x))=e_s(I)=\sum_{\p\in \Assh R} e^{R/\p}_s(IR/\p)\lengthR{R_\p}{R_\p}.\]
For each $\p\in\Assh R$,  $e_s^{R/\p}((I+(x))R/\p)\leq e_s^{R/\p}(IR/\p)$, and so we have equality for all{} such $\p$.  Since $R/\p$ is an $F$-finite complete domain for all $\p\in\Assh R$, $\overline{x}\in \wsc{s}{(IR/\p)}$ by \cite[Theorem 4.6]{Taylor-Interpolating}.   Since $R$ is unmixed, $\Assh R=\Min R$, and so by Theorem~\ref{thm - all ideals have MP}, we have that $x\in \wsc{s}{I}$.  Therefore, $I$ has property $(SM_s)$.
\end{proof}

\subsection*{Property $(LC_s)$: When Weak $s$-Closure is Left-Continuous}

Next we consider the condition that an $s$-closure is the intersection of all larger $s$-closures.  This property is enjoyed by rational powers, which are related to $s$-closures.

\begin{definition} Let $R$ be a ring of characteristic $p>0$ and $s>1$.  We say an ideal $I$ of $R$ has property $(LC_s)$ if the weak $s$-closure is left-continuous on $R$, i.e.\ $\wsc{s}{I}=\bigcap_{t<s}\wsc{t}{I}$. We say the ring $R$ has property $(LC_s)$ if every ideal of $R$ has property $(LC_s)$.
\end{definition}

The containment $\wsc{s}{I}\subseteq \bigcap_{t<s}\wsc{t}{I}$ always holds since the $s$-closure is monotonic in $s$.  In fact, we can say more, and the following Lemma will likely be important in later development of the theory of $s$-closures.

\begin{lemma}\label{lem - t-closure is s-closed} If $R$ is a ring of characteristic $p>0$, $I\subseteq R$ is an ideal, and $1\leq t<s$, then $\wsc{s}{\left(\wsc{t}{I}\right)}=\wsc{t}{I}$.
\end{lemma}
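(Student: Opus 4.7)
The containment $\wsc{t}{I} \subseteq \wsc{s}{(\wsc{t}{I})}$ is immediate since the weak $s$-closure of any ideal always contains that ideal. For the reverse, set $J := \wsc{t}{I}$ and suppose $x \in \wsc{s}{J}$ with witness $c \in R^\circ$, so that $cx^q \in J^{\lceil sq\rceil} + J^{[q]}$ for $q \gg 0$. The goal is to produce a single $C \in R^\circ$ with $Cx^q \in I^{(t,q)}$ for $q \gg 0$, which establishes $x \in \wsc{t}{I}$. The natural strategy is to find a common multiplier that sends both summands of $J^{(s,q)}$ into $I^{(t,q)}$.

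For the Frobenius summand $J^{[q]}$, pick generators $y_1,\ldots,y_m$ of the finitely generated ideal $J$ and form $c_0 := \prod_i c_i \in R^\circ$, where $c_i$ is an individual witness for $y_i \in \wsc{t}{I}$; a routine verification then yields $c_0\,J^{[q]} \subseteq I^{(t,q)}$ for $q \gg 0$. The harder summand is $J^{\lceil sq\rceil}$. Here I would exploit $J \subseteq \overline{I}$, so that $J^{\lceil sq\rceil} \subseteq \overline{I}^{\lceil sq\rceil} \subseteq \overline{I^{\lceil sq\rceil}}$, together with a uniform Briançon--Skoda-type multiplier: there should exist $c_1 \in R^\circ$ and a constant $k \in \N$ such that $c_1 \overline{I^n} \subseteq I^{n-k}$ for all $n \geq k$. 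This gives $c_1\,J^{\lceil sq\rceil} \subseteq I^{\lceil sq\rceil - k}$, and the strict inequality $s > t$ ensures $\lceil sq\rceil - k \geq \lceil tq\rceil$ for $q$ large enough to absorb the shift, whence $c_1\,J^{\lceil sq\rceil} \subseteq I^{\lceil tq\rceil} \subseteq I^{(t,q)}$. Setting $C := c_0 c_1 c \in R^\circ$ then yields $Cx^q \in I^{(t,q)}$ for $q \gg 0$.

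The main obstacle is justifying the uniform multiplier $c_1$ in the paper's stated generality, where $R$ is only assumed noetherian of characteristic $p$; Rees-type uniform multiplier theorems typically presuppose positive height of $I$ and additional niceness of $R$. A more delicate alternative avoiding this is a direct Frobenius computation: writing $cx^q = a + b$ and raising via the Frobenius additivity $(cx^q)^{q'} = c^{q'}x^{qq'} = a^{q'} + b^{q'}$ in characteristic $p$, one multiplies $a^{q'}$ by $c_0^{\lceil sq\rceil}$ to place it in $(I^{(t,q')})^{\lceil sq\rceil} \subseteq I^{\lceil sq\rceil q'} \subseteq I^{\lceil tqq'\rceil}$ (the last inclusion again hinging on the strict $s > t$), while $c_0 b^{q'} \in I^{(t,qq')}$. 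The resulting witness $c_0^{\lceil sq\rceil} c^{q'}$ depends on both $q$ and $q'$, and the most delicate technical point of this second approach is consolidating these varying witnesses into the single-witness form demanded by the definition of $\wsc{t}{I}$; this consolidation, and the way it forces the strict inequality $s > t$ to be used, is where I expect the proof to require its most careful step.
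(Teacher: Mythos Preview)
Your overall structure matches the paper's: set $J=\wsc{t}{I}$, find a uniform multiplier $d\in R^\circ$ with $dJ^{[q]}\subseteq I^{(t,q)}$ for $q\gg0$, and then push both summands of $J^{(s,q)}$ into $I^{(t,q)}$. You correctly isolate the real difficulty as the ordinary-power summand and correctly note that your first route (a uniform Brian\c con--Skoda multiplier for $\overline{I^n}$) is unavailable under the bare hypotheses. Your second route, however, does not close as written: fixing a single large $q$ with $cx^q=a+b$ and Frobenius-raising to $q'$ yields $c_0^{\lceil sq\rceil}\,c^{q'}\,x^{qq'}\in I^{(t,qq')}$, and the factor $c^{q'}$ cannot be stripped off; its exponent grows with $q'$, so no single witness in $R^\circ$ emerges. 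The ``consolidation'' you flag as delicate is in fact the missing idea, not merely a technicality.

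The paper's resolution avoids raising a fixed decomposition and instead invokes Lemma~\ref{lem - basic interaction}. One fixes $q'$ once with $q'(s-t)\geq \mu(J)$ and lets $q$ vary; working directly with $cx^{qq'}\in J^{(s,qq')}$, the lemma gives
\[
J^{\lceil sqq'\rceil}\subseteq \bigl(J^{[q]}\bigr)^{\lceil sq'-\mu(J)\rceil},
\]
and the exponent $\lceil sq'-\mu(J)\rceil$ depends only on the \emph{fixed} $q'$. Multiplying by the fixed element $d^{\lceil sq'-\mu(J)\rceil}$ then lands in $(dJ^{[q]})^{\lceil sq'-\mu(J)\rceil}\subseteq (I^{(t,q)})^{\lceil sq'-\mu(J)\rceil}\subseteq I^{q\lceil sq'-\mu(J)\rceil}\subseteq I^{\lceil tqq'\rceil}$, the last step using $q'(s-t)\geq\mu(J)$. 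Thus the single witness $c\,d^{\lceil sq'-\mu(J)\rceil}$ works for all $q\gg0$. The key point your proposal lacks is bounding the number of $J^{[q]}$-factors uniformly via Lemma~\ref{lem - basic interaction}, rather than letting that number scale with the varying Frobenius power.
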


\begin{proof} Let $J=\wsc{t}{I}$.  We have that $J\subseteq \wsc{s}{J}$, since this holds for all ideals.  Now let $x\in \wsc{s}{J}$, let $c\in R^\circ$ such that $cx^q\in J^{(s,q)}$ for all $q\gg 0$, and let $d\in R^\circ$ such that $dJ^{[q]}\subseteq I^{(t,q)}$ for all $q \gg 0$.  Finally, let $q'$ be such that $q'(s-t)\geq \mu(J)$.  For $q\gg 0$, we have that

\[cd^{\lceil sq'-\mu(J)\rceil}x^{qq'} \in d^{\lceil sq'-\mu(J)\rceil}J^{(s,qq')} = d^{\lceil sq'-\mu(J)\rceil}J^{\lceil sqq'\rceil}+d^{\lceil sq'-\mu(J)\rceil}J^{[qq']}\]
Now $\lceil sq'-\mu(J)\rceil\geq q't\geq 1$, so $d^{\lceil sq'-\mu(J)\rceil}J^{[qq']}\subseteq I^{[qq']}\subseteq I^{(t,qq')}$.  Also, for $q\gg 0$ we have that
\[d^{\lceil sq'-\mu(J)\rceil}J^{\lceil sqq'\rceil}\subseteq d^{\lceil sq'-\mu(J)\rceil}\left(J^{[q]}\right)^{\lceil sq'-\mu(J)\rceil}
=\left(dJ^{[q]}\right)^{\lceil sq'-\mu(J)\rceil}\subseteq \left(I^{(t,q)}\right)^{\lceil sq'-\mu(J)\rceil} \subseteq \left(I^q\right)^{\lceil sq'-\mu(J)\rceil}.\]
Now $q\lceil sq'-\mu(I)\rceil \geq q(sq'-\mu(I)) \geq q(tq')$, and so
\[d^{\lceil sq'-\mu(J)\rceil}J^{\lceil sqq'\rceil}\subseteq \left(I^q\right)^{\lceil sq'-\mu(J)\rceil}\subseteq I^{\lceil tqq'\rceil}\subseteq I^{(t,qq')}.\]
Hence, we have that for all $q\gg 0$, $cd^{\lceil sq'-\mu(J)\rceil}x^{qq'}\in I^{(t,qq')}$, so $x\in \wsc{t}{I}=J$.  Thus $\wsc{s}{J}\subseteq J$.
\end{proof}

\begin{theorem} \label{theorem - LC to ID} If $R$ is a ring of characteristic $p>0$, $s>1$, and $I$ an ideal of $R$, then   $\scl{s}{I}\subseteq \bigcap_{t<s}\wsc{t}{I}$.  In particular, if $I$ has $(LC_s)$ then $I$ has $(ID_s)$.
\end{theorem}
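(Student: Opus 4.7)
The plan is to prove the containment $\scl{s}{I}\subseteq \wsc{t}{I}$ for each fixed $t$ with $1\leq t<s$, and then intersect over all such $t$. The second claim about $(LC_s)\Rightarrow (ID_s)$ then falls out by squeezing $\scl{s}{I}$ between $\wsc{s}{I}$ and the intersection $\bigcap_{t<s}\wsc{t}{I}$.

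Fix $t$ with $1\leq t<s$. The definition of $\scl{s}{I}$ presents it as the stabilization of the chain $I\subseteq \wsc{s}{I}\subseteq \wsc{s}{(\wsc{s}{I})}\subseteq\cdots$, so it suffices to show by induction on $n$ that the $n$-fold iterate $J_n:=\wsc{s}{(\wsc{s}{(\cdots \wsc{s}{I}\cdots)})}$ is contained in $\wsc{t}{I}$. The base case $n=0$ is trivial since $I\subseteq \wsc{t}{I}$. For the inductive step, assume $J_n\subseteq \wsc{t}{I}$. By the monotonicity of weak $s$-closure, $J_{n+1}=\wsc{s}{J_n}\subseteq \wsc{s}{(\wsc{t}{I})}$, and by Lemma~\ref{lem - t-closure is s-closed} the right-hand side equals $\wsc{t}{I}$. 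This completes the induction, so every $J_n$ lies in $\wsc{t}{I}$, hence so does the stabilized ideal $\scl{s}{I}$. Intersecting over $t<s$ yields $\scl{s}{I}\subseteq \bigcap_{t<s}\wsc{t}{I}$.

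For the second statement, suppose $I$ has $(LC_s)$, so that $\wsc{s}{I}=\bigcap_{t<s}\wsc{t}{I}$. Combining with what we just proved and the trivial inclusion $\wsc{s}{I}\subseteq \scl{s}{I}$ gives
\[\wsc{s}{I}\subseteq \scl{s}{I}\subseteq \bigcap_{t<s}\wsc{t}{I}=\wsc{s}{I},\]
forcing equality throughout and hence $(ID_s)$.

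There is no serious obstacle here: the entire argument is a short induction powered by Lemma~\ref{lem - t-closure is s-closed}, which has already done the real technical work of showing that applying a weaker closure operation $\wsc{s}{-}$ to a tighter closure $\wsc{t}{I}$ does not enlarge it. The only mild point to be careful about is the phrasing of the induction, which must reference the stabilization of the iterated chain rather than any single iterate, since we do not know in advance at which step the chain stabilizes.
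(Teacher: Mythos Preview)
Your proof is correct and follows essentially the same approach as the paper: both arguments use Lemma~\ref{lem - t-closure is s-closed} together with monotonicity of weak $s$-closure to show that each iterate of $\wsc{s}{-}$ applied to $I$ stays inside $\wsc{t}{I}$ for every $t<s$. The only cosmetic difference is that the paper phrases the induction by showing directly that the intersection $\bigcap_{t<s}\wsc{t}{I}$ is stable under applying $\wsc{s}{-}$, whereas you fix $t$ first and induct; the content is identical.
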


\begin{proof} Let $J\subseteq \bigcap_{t<s}\wsc{t}{I}$ be any ideal.  For any $t<s$, we have that $J\subseteq \wsc{t}{I}$, and hence $\wsc{s}{J}\subseteq \wsc{s}{\left(\wsc{t}{I}\right)}=\wsc{t}{I}$ by Lemma~\ref{lem - t-closure is s-closed}.   Therefore $\wsc{s}{J}\subseteq \bigcap_{t<s}\wsc{t}{I}$.

Since $I\subseteq \bigcap_{t<s}\wsc{t}{I}$, and $\scl{s}{I}$ is obtained by applying the weak $s$-closure a finite number of times to $I$, we have that $\scl{s}{I}\subseteq \bigcap_{t<s}\wsc{t}{I}$.
\end{proof}

\subsection*{Property $(LS_s)$: When $s$-Closure is Left-Stable}

In this section we consider intervals of $s$ on which the weak $s$-closures of an ideal are constant.  Computations have shown that for many easily-understood ideals, the $s$-multiplicity is \emph{left-stable}, i.e.\ that for a given $s$ and ideal $I$, $\wsc{t}{I}=\wsc{s}{I}$ for all $t$ slightly smaller than $s$.  This is the strongest condition that we give a label to in this paper.  Before defining it, however, we give a weaker result that gives some insight into the intervals of $s$ on which $I$ has same $s$-closure.

\begin{theorem}\label{theorem - intervals of constancy}  Let $R$ be a ring of characteristic $p>0$ and $s\geq 1$.  There exists $\epsilon>0$ such that $\wsc{t}{I}=\wsc{s+\epsilon}{I}$ for any $t\in (s,s+\epsilon]$.
\end{theorem}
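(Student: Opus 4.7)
The plan is to exploit the fact that the weak $s$-closure is monotonically decreasing in $s$, together with the Noetherian property of $R$. First I would record the basic monotonicity: for $t_1\leq t_2$, we have $\lceil t_1 q\rceil\leq \lceil t_2 q\rceil$, so $I^{(t_2,q)}\subseteq I^{(t_1,q)}$, and hence $\wsc{t_2}{I}\subseteq \wsc{t_1}{I}$. In particular, the family $\{\wsc{t}{I}\}_{t>s}$ is an ascending family of ideals as $t$ decreases toward $s$.

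The main step is to form the union $J=\bigcup_{t>s}\wsc{t}{I}$, which is itself an ideal of $R$. Since $R$ is Noetherian, $J$ is finitely generated. I would pick generators $x_1,\ldots,x_n\in J$ and, for each $i$, choose some $t_i>s$ with $x_i\in \wsc{t_i}{I}$. Setting $\epsilon=\min_i(t_i-s)>0$, the monotonicity in the previous paragraph gives $\wsc{t_i}{I}\subseteq \wsc{s+\epsilon}{I}$, since $t_i\geq s+\epsilon$. Therefore every $x_i$ lies in $\wsc{s+\epsilon}{I}$, and so $J\subseteq \wsc{s+\epsilon}{I}$. The reverse inclusion $\wsc{s+\epsilon}{I}\subseteq J$ is immediate from the definition of $J$, so $J=\wsc{s+\epsilon}{I}$.

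To conclude, I would observe that for any $t\in(s,s+\epsilon]$, monotonicity gives $\wsc{s+\epsilon}{I}\subseteq \wsc{t}{I}$, while the definition of $J$ as the union over all $t>s$ gives $\wsc{t}{I}\subseteq J=\wsc{s+\epsilon}{I}$. Hence $\wsc{t}{I}=\wsc{s+\epsilon}{I}$ on the entire half-open interval $(s,s+\epsilon]$, which is exactly the claim.

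There is no substantial obstacle here: the entire argument is a standard application of Noetherianity to an ascending chain, and the only content beyond that is the observation that the $\wsc{}$ operation is monotonic in its parameter, which is already embedded in the definition of $I^{(s,q)}$. The step requiring the most care is simply making sure the indexing direction of the monotonicity is right, so that finitely many generators can be absorbed into a single ideal $\wsc{s+\epsilon}{I}$ with $\epsilon>0$ rather than $\epsilon=0$.
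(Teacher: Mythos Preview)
Your proof is correct and follows essentially the same strategy as the paper's: both use the monotonicity of $\wsc{t}{I}$ in $t$ together with the ascending chain condition. The only cosmetic difference is that the paper picks a countable cofinal sequence $\wsc{s+1/n}{I}$ and applies ACC directly, whereas you take the full directed union and use finite generation; these are interchangeable formulations of the same Noetherian argument.
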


\begin{proof} Consider the chain of ideals 
\[\wsc{s+1}{I}\subseteq \wsc{s+1/2}{I}\subseteq \wsc{s+1/3}{I}\subseteq \cdots.\]
Since $R$ is noetherian, there exists $m\in \N$ such that for all $n\geq m$, $\wsc{s+1/n}{I}=\wsc{s+1/m}{I}$.  Hence, for any $t\in (s,s+1/m]$, there exists some $n$ such that $s+1/n<t$, and so $\wsc{s+1/m}{I}\subseteq \wsc{t}{I}\subseteq \wsc{s+1/n}{I}=\wsc{s+1/m}{I}$.
\end{proof}

Theorem~\ref{theorem - intervals of constancy} inspires a definition of jumping numbers for $s$-closure similar to that for test ideals.

\begin{definition} Let $R$ be a ring of characteristic $p>0$ and $I$ an ideal of $R$.  We say that a real number $s\geq 1$ is an \emph{$s$-jumping number} for $I$ if, for all $t>s$, $\wsc{s}{I}\neq \wsc{t}{I}$.
\end{definition}

Theorem~\ref{theorem - intervals of constancy} implies that jumping numbers cannot accumulate above a given $s\geq 1$.  That is, for any $s$ there is an $\epsilon>0$ such that there are no $s$-jumping numbers in $(s,s+\epsilon)$.  However, we do not have a theorem disproving the existence of such accumulations below $s$.  Therefore, we define a property based on that condition, which we show holds for some well-understood classes of ideals.

\begin{definition} Let $R$ be a ring of characteristic $p>0$ and $s>1$ a real number.  We say an ideal $I$ of $R$ has property $(LS_s)$ if the weak $s$-closure of $I$ is left-stable, i.e.\ there exists $\epsilon>0$ such that $\wsc{t}{I}=\wsc{s}{I}$ for all $s-\epsilon<t<s$.  We say the ring $R$ has property $(LS_s)$ if every ideal of $R$ has property $(LS_s)$. 
\end{definition}

Left-stability is a very strong condition, implying left-continuity and therefore idempotence.

\begin{theorem} \label{theorem - LS to LC} If $R$ is a ring of characteristic $p>0$, $s>1$, $I$ is an ideal of $R$, and $I$ has $(LS_s)$, then $I$ has $(LC_s)$.
\end{theorem}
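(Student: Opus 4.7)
The plan is to unwind the two definitions and observe that $(LS_s)$ is essentially stronger than $(LC_s)$ by a pigeonhole on a single $t$ in the left-stable window.

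First I would record the trivial direction: since $I^{(s,q)} \subseteq I^{(t,q)}$ whenever $t<s$, the weak closure is antitone in the parameter, so $\wsc{s}{I} \subseteq \wsc{t}{I}$ for every $t<s$, and therefore $\wsc{s}{I} \subseteq \bigcap_{t<s} \wsc{t}{I}$.

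For the reverse direction, I would use $(LS_s)$ to pick an $\epsilon>0$ such that $\wsc{t}{I} = \wsc{s}{I}$ for all $t\in(s-\epsilon,s)$. Then fixing any single $t_0 \in (s-\epsilon,s)$ gives
\[\bigcap_{t<s}\wsc{t}{I} \subseteq \wsc{t_0}{I} = \wsc{s}{I},\]
and combined with the previous containment this yields the equality $\wsc{s}{I} = \bigcap_{t<s}\wsc{t}{I}$, which is exactly $(LC_s)$.

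There is essentially no obstacle here: the argument is purely formal once one has the monotonicity of weak $s$-closure in $s$ (already recorded in the preliminaries) and the definition of left-stability, which asserts constancy on a whole left neighborhood of $s$ rather than merely a filtered limit. The short proof should be written so as to make clear that $(LS_s)$ is strictly a definitional strengthening of $(LC_s)$, justifying the remark in the statement that left-stability implies left-continuity and, via Theorem~\ref{theorem - LC to ID}, idempotence.
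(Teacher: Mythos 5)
Your proof is correct and matches the paper's argument exactly: both pick a single $t_0<s$ in the left-stable window where $\wsc{t_0}{I}=\wsc{s}{I}$, and then use $\bigcap_{t<s}\wsc{t}{I}\subseteq\wsc{t_0}{I}$ together with the standard reverse containment from antitonicity in $s$. No differences of substance.
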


\begin{proof} Since $I$ has $(LS_s)$, there exists $u<s$ such that $\wsc{u}{I}=\wsc{s}{I}$.  This implies that $\bigcap_{t<s}\wsc{t}{I}\subseteq \wsc{u}{I}=\wsc{s}{I}$.
\end{proof}

Left-stability is enjoyed by ideals that can be described by their rational powers.

\begin{theorem}\label{thm - sclosure=rational implies LS} If $R$ is a ring of characteristic $p>0$ and $I$ is an ideal of $R$ such that $\wsc{s}{I}=I^*+I_s$ for all $s\in\Q$, then $I$ has $(LS_s)$ for all $s> 1$.
\end{theorem}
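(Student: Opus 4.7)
The plan is to use Theorem~\ref{thm - discrete rational powers} to show that the hypothesis forces $\wsc{s}{I}$ to be a step function of $s$ with jumps only at a discrete set of points. Fixing $e \in \N$ so that $I_\alpha = I_{\lceil \alpha e\rceil/e}$ for every rational $\alpha \geq 0$, the key claim I would establish is that for every real $s > 1$,
\[\wsc{s}{I} = I^* + I_{\lceil se\rceil/e}.\]
Since the right side depends on $s$ only through $\lceil se\rceil$, it is constant on each interval $((k-1)/e, k/e]$, and $(LS_s)$ falls out immediately: given $s > 1$, set $k = \lceil se\rceil$ and take $0 < \epsilon < s - (k-1)/e$ so that $(s - \epsilon, s) \subseteq ((k-1)/e, k/e]$; then $\wsc{t}{I} = I^* + I_{k/e} = \wsc{s}{I}$ for every real $t \in (s - \epsilon, s)$.

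To prove the key identity for real $s \in ((k-1)/e, k/e]$, the $\supseteq$ direction uses $I^* \subseteq \wsc{s}{I}$ together with Theorem~\ref{thm - rational powers to sclosure}, which gives $I_{k/e} \subseteq \wsc{k/e}{I}$, followed by monotonicity with $s \leq k/e$ to obtain $\wsc{k/e}{I} \subseteq \wsc{s}{I}$. For the $\subseteq$ direction, pick a rational $u \in ((k-1)/e, s)$; note that $s > 1$ forces $k \geq e + 1$ and hence $(k-1)/e \geq 1$, so any such $u$ automatically satisfies $u \geq 1$ and the hypothesis applies. Since $(k-1)/e < u < k/e$ we have $\lceil ue\rceil = k$, hence $I_u = I_{k/e}$, and the hypothesis gives $\wsc{u}{I} = I^* + I_{k/e}$. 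Monotonicity with $u < s$ then yields $\wsc{s}{I} \subseteq \wsc{u}{I} = I^* + I_{k/e}$.

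The main obstacle is structural rather than computational: the hypothesis is assumed only at rational parameters, whereas $(LS_s)$ must hold at arbitrary real $s$. The resolution is the two-sided sandwich, using a rational $u$ just below $s$ to invoke the hypothesis on the $\subseteq$ side, and the rational anchor $k/e \geq s$ to invoke Theorem~\ref{thm - rational powers to sclosure} on the $\supseteq$ side; neither rational alone suffices to pin down $\wsc{s}{I}$. A minor technicality is that Theorems~\ref{thm - discrete rational powers} and~\ref{thm - rational powers to sclosure} both require $I$ to have positive height, which should be added as an implicit hypothesis or handled separately in the degenerate height-zero case.
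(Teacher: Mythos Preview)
Your proof is correct and takes essentially the same approach as the paper: fix $e$ via Theorem~\ref{thm - discrete rational powers}, then sandwich using rational parameters in the interval $((\lceil se\rceil-1)/e,\ \lceil se\rceil/e]$, where the hypothesis applies and the rational powers coincide. The only cosmetic difference is that the paper sandwiches $\wsc{t}{I}$ directly between $\wsc{\alpha}{I}$ and $\wsc{\beta}{I}$ for rational $\alpha\leq t<s\leq\beta$ in that interval, rather than first establishing your explicit step-function formula for $\wsc{s}{I}$; your remark about the implicit positive-height hypothesis needed to invoke Theorem~\ref{thm - discrete rational powers} is apt and applies equally to the paper's own argument.
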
 

\begin{proof} By Theorem~\ref{thm - discrete rational powers}, there exists $e\in \N$ such that $I_\alpha = I_{\lceil \alpha e\rceil/e}$ for all $\alpha\in\Q_{>0}$.  Now let $t\geq 1$ be any real number such that $\frac{\lceil s e\rceil -1}{e}<t<s$.  Finally, let $\alpha,\beta\in\Q$ such that 
\[\frac{\lceil s e\rceil -1}{e}<\alpha\leq t<s\leq \beta \leq \frac{\lceil se\rceil}{e}.\]  We have that
\[\wsc{t}{I}\subseteq \wsc{\alpha}{I} = I^*+I_\alpha = I^*+I_\beta = \wsc{\beta}{I}\subseteq \wsc{s}{I}.\]
Therefore $\wsc{t}{I}=\wsc{s}{I}$.  Hence $I$ has $(LS_s)$.
\end{proof}

\begin{corollary}\label{cor - ideals with LS} The following classes of ideals have property $(LS_s)$ for all $s>1$.  All rings mentioned have characteristic $p>0$.
\begin{enumerate}
	\item \label{monom in poly} Monomial ideals in polynomial rings over a field.
	\item \label{monom in toric} Monomial ideals in affine semigroup rings over a field.
	\item \label{homog in monomial-like} Homogeneous ideals of positive height in graded rings in which each graded piece has length 1 over the zeroth piece. 
	\item \label{powers of maximal} Powers of $R_+$, where $R$ is an $\N$-graded ring, generated in degree $1$ over $R_0$, and $R_+$ is the ideal generated by all homogeneous elements of degree 1.
	\item \label{principal} Principal ideals.
\end{enumerate}
\end{corollary}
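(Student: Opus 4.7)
The plan is to reduce each item to Theorem~\ref{thm - sclosure=rational implies LS}, which gives $(LS_s)$ for all $s>1$ as soon as $\wsc{s}{I}=I^*+I_s$ holds for every rational $s\ge 1$. So for each class I need to establish this rational-power description of the weak $s$-closure.

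For items (1) and (2) I will observe they are special cases of (3): a polynomial ring or an affine semigroup ring over a field $k$ carries a fine grading (by $\N^d$ or by the defining semigroup) in which each graded component is one-dimensional over $k$, and monomial ideals are homogeneous for this grading. For (3), I will use that in any graded ring where each piece has length one over $R_0$, every homogeneous ideal $J$ satisfies $[J]_g\in\{0,[R]_g\}$ in each degree $g$. Hence for every $s,q,g$, one of $[I^{\lceil sq\rceil}]_g$ and $[I^{[q]}]_g$ is contained in the other, and Theorem~\ref{thm - sclosure is mixed power criterion} applies directly to give $\wsc{s}{I}=I^*+I_s$.

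For (4), let $I=R_+^n$. The key input is that since $R$ is generated in degree $1$, any homogeneous element of degree $d$ lies in $R_+^d$, so $R_+^m$ coincides with the set of elements whose homogeneous components all have degree $\ge m$. I will use this twice. First, for rational $s=a/b$ and homogeneous $x\in R_+^{\lceil sn\rceil}$ of degree $d$, $x^b\in R_+^{bd}\subseteq R_+^{na}$, which shows $R_+^{\lceil sn\rceil}\subseteq(R_+^n)_s$; together with Theorem~\ref{thm - rational powers to sclosure} this gives $(R_+^n)^*+R_+^{\lceil sn\rceil}\subseteq\wsc{s}{R_+^n}$. Second, after Theorem~\ref{thm-graded sclosure} reduces the converse to a homogeneous $x\in\wsc{s}{R_+^n}\setminus(R_+^n)^*$, Theorem~\ref{thm - min degree} forces $\deg x\ge sn$, hence $x\in R_+^{\deg x}\subseteq R_+^{\lceil sn\rceil}$. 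Combining, $\wsc{s}{R_+^n}=(R_+^n)^*+R_+^{\lceil sn\rceil}=(R_+^n)^*+(R_+^n)_s$.

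Item (5) does not even need Theorem~\ref{thm - sclosure=rational implies LS}: for a principal ideal $I=(f)$, the inequality $\lceil sq\rceil\ge q$ gives $I^{\lceil sq\rceil}\subseteq I^{[q]}$, so $I^{(s,q)}=I^{[q]}$ and $\wsc{s}{I}=I^*$ is $s$-independent. The main obstacle I anticipate is the bookkeeping in (4), together with attention to the positive-height hypothesis in Theorem~\ref{thm - rational powers to sclosure}: in the degenerate case where $R_+^n$ is nilpotent the mixed powers $I^{(s,q)}$ eventually vanish and $\wsc{s}{R_+^n}$ is automatically $s$-independent, so $(LS_s)$ still holds for elementary reasons.
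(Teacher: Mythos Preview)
Your argument is correct. Items (1)--(3) and (5) match the paper's proof essentially line for line: reduce (1) and (2) to (3) via the fine monomial grading, use the length-one hypothesis in (3) to force $[I^{[q]}]_g\in\{0,R_g\}$ and invoke Theorem~\ref{thm - sclosure is mixed power criterion}, and observe for (5) that $\wsc{s}{I}$ is independent of $s$ (the paper phrases this as $I^*=\overline I$, you as $I^{(s,q)}=I^{[q]}$; both are immediate).

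The only genuine divergence is item~(4). The paper stays within the framework of Theorem~\ref{thm - sclosure is mixed power criterion}: it checks that $[I^{\lceil sq\rceil}]_g$ itself is always $0$ or $R_g$ (according as $g<n\lceil sq\rceil$ or $g\ge n\lceil sq\rceil$), so the comparability hypothesis is again trivially satisfied, and then appeals to Theorem~\ref{thm - sclosure=rational implies LS}. You instead compute $\wsc{s}{R_+^n}$ directly as $(R_+^n)^*+R_+^{\lceil sn\rceil}$, using Theorem~\ref{thm - min degree} for the upper bound and an explicit rational-power calculation for the lower bound. Your route is slightly longer but has the advantage of exhibiting the closure concretely; the paper's route is more uniform with the treatment of~(3). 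Both approaches pass through Theorem~\ref{thm - rational powers to sclosure} (yours explicitly, the paper's via Theorem~\ref{thm - sclosure is mixed power criterion}) and hence both tacitly need positive height; you noticed this and sketched a workaround for the degenerate case, which the paper does not do.
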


\begin{proof} 
Items \eqref{monom in poly} and \eqref{monom in toric} follow from part \eqref{homog in monomial-like} when we take the monomial $\N^n$-grading.  Thus, let $R=\bigoplus_{g\in G} R_g$ be a graded ring such that for all $g\in G$, $R_g$ has length $1$ over $R_0$.  Let $I\subseteq R$ be a homogeneous ideal, and fix $g\in G$. %Since $\lambda_{R_0}(R_g)=1$, we have that $R_g\cong R_0/\m$ as $R_0$-modules, where $\m$ is a maximal ideal of $R_0$.  
For any $q$, $[I^{[q]}]_g$ is an $R_0$-submodule of $R_g$, and therefore $[I^{[q]}]_g=R_g$ or $[I^{[q]}]_g=0$.  For any rational $s>1$, in the first case we have that $[I^{\lceil sq\rceil}]_g\subseteq [I^{[q]}]_g$ and in the second we have that $[I^{[q]}]_g\subseteq [I^{\lceil sq\rceil}]_g$.  Thus, by Theorem~\ref{thm - sclosure is mixed power criterion}, $\wsc{s}{I}=I^*+I_s$.  Hence by Theorem~\ref{thm - sclosure=rational implies LS}, $I$ has property $(LS_s)$ for all $s$. This proves \eqref{homog in monomial-like}.

For item \eqref{powers of maximal}, let $R$ be an $\N$-graded ring generated in degree 1 as an $R_0$-algebra.  Let $x_1,\ldots, x_t$ be a set of degree 1 generators for $R$ as an $R_0$-algebra, and let $R_+=(x_1,\ldots, x_t)$.  Finally, let $n\in\N$ and $I=(R_+)^n$.   Fix $g\in \N$, $q>0$, and $s\in \Q_{>0}$.  If $g\geq n\lceil sq\rceil$, then any $x\in R_g$ can be written as $x=rx_1^{a_1}\cdots x_t^{a_t}$, where $r\in R_0$ and $\sum_i a_i\geq n\lceil sq\rceil$.  Therefore, $x\in (R_+)^{n\lceil sq\rceil}=I^{\lceil sq\rceil}$.  Hence, if $g\geq n\lceil sq\rceil$, $[I^{\lceil sq\rceil}]_g=R_g$.
On the other hand, any homogeneous element of $I^{\lceil sq\rceil}$ must have degree at least $n\lceil sq\rceil$ since $I$ is generated in degree $n$.  Therefore, if $g<n\lceil sq\rceil$, then $[I^{\lceil sq\rceil}]_g=0$.  Hence for any $g$, $[I^{\lceil sq\rceil}]_g\subseteq [I^{[q]}]_g$ or $[I^{[q]}]_g\subseteq [I^{\lceil sq\rceil}]_g$, and by Theorem~\ref{thm - sclosure is mixed power criterion}, $\wsc{s}{I}=I^*+I_s$.  Hence by Theorem~\ref{thm - sclosure=rational implies LS}, $I$ has property $(LS_s)$ for all $s$.

Item \eqref{principal} follows from the fact that for a principal ideal $I$, $I^*=\overline{I}$, and so for all $s$, $\overline{I}=I^*\subseteq \wsc{s}{I}\subseteq \overline{I}$, hence $\wsc{s}{I}=\overline{I}$.\end{proof}

\subsection*{Relationships Between the Properties}
The various implications between the properties that we have defined can be summarized in the following figure.

\begin{figure}[!htb]
\begin{tikzpicture}[scale=3]
\node (LS) at (0,1) {Left-Stable};
\node (LC) at (1.75,1) {Left-Continuous};
\node (ID) at (4,1) {Idempotent};
\node (MP) at (1.5,0) {$\begin{array}{c} (R,\m) \text{ $F$-finite, unmixed, }\\ \text{complete, } I \ \m\text{-primary}\end{array}$};
\node (SM) at (4,0) {Testable using $s$-Multiplicity};
\draw[double equal sign distance,-implies] (LS) -- (LC) node[midway,below]{Theorem \ref{theorem - LS to LC}};
\draw[double equal sign distance,-implies] (LC) -- (ID) node[midway,below]{Theorem \ref{theorem - LC to ID}};
\draw[double equal sign distance,-implies] (MP) -- (SM) node[midway,below]{Theorem \ref{theorem - MP to SM}} ;
\draw[double equal sign distance,-implies] (SM) -- (ID) node[midway,right] {$\begin{array}{c} (R,\m) \text{ local,}\\ I \ \m\text{-primary} \\\text{Theorem \ref{theorem - SM to ID}}\end{array}$} ;

\end{tikzpicture}
\end{figure}

\section{A Brian\c con-Skoda Theorem for $s$-Closure}\label{sec - BS}

The classical Brian\c con-Skoda Theorem describes containments between the integral closures of powers of an ideal and the powers themselves.  In particular, when $I$ is an ideal in a regular ring, we have that for all $n\in\N$, $\overline{I^{n+\mu(I)-1}}\subseteq I^n$. 
The statement is generalized by Hochster and Huneke in \cite[Theorem 5.4]{Hochster-Huneke-TightClosureBrianconSkoda}, who show that even in singular rings we have $\overline{I^{n+\mu(I)-1}}\subseteq (I^n)^*$ for all $n\in\N$.  This, combined with their proof that in regular rings all ideals are tightly closed, gives a new proof of the Brian\c con-Skoda Theorem.
In this section we develop a generalization of the Brian\c con-Skoda Theorem in positive characteristic.

\begin{theorem} \label{theorem - Briancon-Skoda} Let $R$ be a ring of characteristic $p>0$, $1\leq t< s$, and $I$ an ideal of $R$.  If $r\geq \frac{(\mu(I)-1)(s-t)}{t(s-1)}$, then for all $n\in\N$, $\wsc{t}{\left(I^{n+r}\right)}\subseteq \wsc{s}{\left(I^n\right)}$.
\end{theorem}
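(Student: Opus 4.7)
The plan is to reduce the statement to a single ideal-level containment and then verify it combinatorially on monomials in generators of $I$. If $x \in \wsc{t}{(I^{n+r})}$ with $c \in R^\circ$ satisfying $cx^q \in (I^{n+r})^{\lceil tq\rceil} + (I^{n+r})^{[q]}$ for all $q \gg 0$, then the Frobenius summand is automatically in the target: $(I^{n+r})^{[q]} = (I^{[q]})^{n+r} \subseteq (I^{[q]})^n = (I^n)^{[q]}$. Hence it suffices to show that, for $q \gg 0$,
\[ I^{(n+r)\lceil tq\rceil} \subseteq I^{n\lceil sq\rceil} + (I^n)^{[q]}, \]
for then the right-hand side equals $(I^n)^{(s,q)}$ and one obtains $cx^q \in (I^n)^{(s,q)}$, giving $x\in\wsc{s}{(I^n)}$.

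I would prove the displayed containment generator by generator. Let $x_1,\dots,x_{\mu(I)}$ be a generating set of $I$, and let $m = \prod_i x_i^{e_i}$ be a typical generator of the left-hand ideal, so $\sum_i e_i \geq (n+r)\lceil tq\rceil$. Write $e_i = qa_i + b_i$ with $0 \leq b_i < q$, and set $A = \sum_i a_i$ and $B = \sum_i b_i$. Two sufficient conditions then present themselves: if $A \geq n$, pulling the $q$th powers out gives $m \in (I^{[q]})^n = (I^n)^{[q]}$; if instead $\sum_i e_i \geq n\lceil sq\rceil$, then $m \in I^{n\lceil sq\rceil}$. The task therefore reduces to ruling out, for every $n \geq 1$ and all $q \gg 0$, the \emph{bad case} in which both $A \leq n-1$ and $\sum_i e_i < n\lceil sq\rceil$ hold simultaneously (the case $n=0$ being trivial since $I^0 = R$).

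In the bad case, $A \leq n-1$ together with $B \leq \mu(I)(q-1)$ forces $\sum_i e_i \leq q(n+\mu(I)-1) - \mu(I)$; combined with $\sum_i e_i \geq (n+r)\lceil tq\rceil \geq (n+r)tq$, this yields $(n+r)t < n + \mu(I) - 1$, i.e.\ $r < [\mu(I) - 1 - n(t-1)]/t$. Separately, $\sum_i e_i < n\lceil sq\rceil \leq nsq + n$ together with $\sum_i e_i \geq (n+r)tq$ gives $((n+r)t - ns)q \leq n - 1$, which for $q$ sufficiently large (depending on $n$) forces $(n+r)t \leq ns$, i.e.\ $r \leq n(s-t)/t$. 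Viewed as functions of $n$, the first bound is decreasing and the second is increasing, and they meet at $n^{*} := (\mu(I)-1)/(s-1)$ with common value $n^{*}(s-t)/t = (\mu(I)-1)(s-t)/(t(s-1))$. Hence the minimum of the two bounds never exceeds this value, and the hypothesis $r \geq (\mu(I)-1)(s-t)/(t(s-1))$ precludes the bad case for every $n \geq 1$ and all $q \gg 0$.

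The main obstacle will be the balancing act in the last step: recognizing that the ``too few Frobenius factors'' constraint and the ``too small total degree'' constraint trade off in exactly the right way in $n$, with the worst case occurring at the crossing point $n^{*}$, where both constraints yield precisely the stated bound on $r$. Once this balance is identified, the rest is routine bookkeeping with ceilings and asymptotics, and no hypotheses on $R$ beyond the standing assumptions of the paper are required.
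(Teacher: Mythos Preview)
Your proof is correct and is essentially the same as the paper's: both reduce to showing $I^{(n+r)\lceil tq\rceil}\subseteq I^{n\lceil sq\rceil}+(I^n)^{[q]}$ for $q\gg 0$, and both hinge on the dichotomy governed by $n^*=(\mu(I)-1)/(s-1)$, with the ordinary-power containment handling $n<n^*$ and the pigeonhole/Frobenius-power containment (your $A\geq n$, the paper's Lemma~\ref{lem - basic interaction}) handling $n\geq n^*$. The only difference is packaging---the paper does the case split on $n$ up front, while you frame it as ruling out a ``bad case'' monomial-by-monomial and then discover the same split as the crossing point of your two bounds $f(n)$ and $g(n)$.
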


\begin{proof}  We consider two cases.  First, suppose that $n<\frac{\mu(I)-1}{s-1}$.  This implies that $r\geq \frac{(\mu(I)-1)(s-t)}{t(s-1)}>\frac{n(s-t)}{t}$.  If $q$ is large enough that $\frac{n(s-t)}{t}+\frac{n}{tq}\leq r$, then
\[(n+r)\lceil tq\rceil \geq ntq+rtq\geq ntq+n(s-t)q+n=nsq+n\geq n\lceil sq\rceil.\]
Therefore, for all $q\gg 0$,
\[\left(I^{n+r}\right)^{(t,q)}=I^{(n+r)\lceil tq\rceil}+\left(I^{n+r}\right)^{[q]}\subseteq I^{n\lceil sq\rceil}+\left(I^{n}\right)^{[q]}=\left(I^{n}\right)^{(s,q)}.
\]
Therefore $\wsc{t}{\left(I^{n+r}\right)}\subseteq \wsc{s}{\left(I^n\right)}$.

Now suppose that $n\geq \frac{\mu(I)-1}{s-1}$.  In this case we have that
\[(n+r)t=n+n(t-1)+rt \geq n+\frac{(\mu(I)-1)(t-1)}{s-1}+\frac{(\mu(I)-1)(s-t)}{s-1}=n+\mu(I)-1\]
and hence, for any $q$,
\[\left(I^{n+r}\right)^{(t,q)}=I^{(n+r)\lceil tq\rceil}+\left(I^{n+r}\right)^{[q]}\subseteq I^{(n+\mu(I)-1)q}+\left(I^{n}\right)^{[q]}\subseteq \left(I^{n}\right)^{[q]}.\]
Therefore, $\wsc{t}{\left(I^{n+r}\right)}\subseteq\left(I^n\right)^*\subseteq \wsc{s}{\left(I^n\right)}$.
\end{proof}

Theorem~\ref{theorem - Briancon-Skoda} recovers the classical Brian\c con-Skoda Theorem by taking $t=1$ and $r=\mu(I)-1$.  In particular, we note that Theroem~\ref{theorem - Briancon-Skoda} does not give us a stronger version of the theorem in the case that one of our closures is integral closure. 

We record two immediate corollaries, one of the statement of Theorem~\ref{theorem - Briancon-Skoda}  and one of its proof.

\begin{corollary}  Suppose $(R,\m)$ is a local ring with dimension $d$, characteristic $p>0$, and infinite residue field, let $I\subseteq R$  be an ideal with reduction number $w$, and let $1\leq t< s$.  If $r\geq \frac{(d-1)(s-t)}{t(s-1)}$, then for all $n\in\N$, $\wsc{t}{\left(I^{n+r+w}\right)}\subseteq \wsc{s}{\left(I^n\right)}$.
\end{corollary}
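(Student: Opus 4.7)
The plan is to reduce the problem to Theorem~\ref{theorem - Briancon-Skoda} applied to a minimal reduction $J$ of $I$. Because the residue field is infinite, $I$ admits a minimal reduction $J$ whose minimal number of generators equals the analytic spread of $I$, which is at most $d=\dim R$. In particular, $\mu(J)\leq d$, so the hypothesis $r\geq\frac{(d-1)(s-t)}{t(s-1)}$ implies $r\geq\frac{(\mu(J)-1)(s-t)}{t(s-1)}$, putting us exactly in position to apply Theorem~\ref{theorem - Briancon-Skoda} to $J$.

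Next I would invoke the definition of the reduction number $w$: by hypothesis $I^{w+1}=JI^{w}$, and iterating gives $I^{w+k}=J^{k}I^{w}$ for every $k\geq 0$. Taking $k=n+r$ yields
\[I^{n+r+w}=J^{n+r}I^{w}\subseteq J^{n+r}.\]
Since weak $s$-closure is monotonic under inclusion, we obtain
\[\wsc{t}{I^{n+r+w}}\subseteq\wsc{t}{J^{n+r}}.\]

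Now I would apply Theorem~\ref{theorem - Briancon-Skoda} to the ideal $J$ with the same $t$, $s$, and $r$: since $r\geq\frac{(\mu(J)-1)(s-t)}{t(s-1)}$, we get
\[\wsc{t}{J^{n+r}}\subseteq\wsc{s}{J^{n}}.\]
Finally, since $J\subseteq I$ forces $J^{n}\subseteq I^{n}$, monotonicity of weak $s$-closure gives $\wsc{s}{J^{n}}\subseteq\wsc{s}{I^{n}}$. Chaining the three inclusions yields $\wsc{t}{I^{n+r+w}}\subseteq\wsc{s}{I^{n}}$, as desired.

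There is no real obstacle here beyond recognizing that the gain from replacing $I$ by a minimal reduction is twofold: the minimal number of generators drops from $\mu(I)$ to at most $d$ (which is exactly what the improved hypothesis on $r$ demands), while the reduction number $w$ is absorbed as an extra shift in the exponent via $I^{n+r+w}\subseteq J^{n+r}$. The only mild point to be careful about is that this corollary is phrased in terms of $\mu(I)$ being replaced by $d$, and one should be precise that the analytic spread of $I$ (not just any upper bound on generators of a reduction) is at most $\dim R$; this is standard for local rings with infinite residue field.
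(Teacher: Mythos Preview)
Your proof is correct and follows essentially the same approach as the paper: pass to a minimal reduction $J$ of $I$ generated by at most $d$ elements, use $I^{n+r+w}\subseteq J^{n+r}$ to handle the shift by the reduction number, apply Theorem~\ref{theorem - Briancon-Skoda} to $J$, and then use $J^n\subseteq I^n$. Your write-up is in fact a bit more explicit than the paper's about why $\mu(J)\leq d$ (via analytic spread) and why $I^{n+r+w}\subseteq J^{n+r}$ (via iterating $I^{w+1}=JI^w$).
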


\begin{proof}  Since $R$ has infinite residue field, $I$ has a minimal reduction $J$ with reduction number $w$ and generated by $d$ elements.  Therefore, 
\[\wsc{s}{\left(I^{n+r+w}\right)}\subseteq \wsc{t}{\left(J^{n+r}\right)}\subseteq \wsc{s}{\left(J^n\right)}\subseteq \wsc{s}{\left(I^n\right)}.\qedhere\]
\end{proof}

\begin{corollary} \label{cor - asymptotic gives tight} Let $R$ be a ring of characteristic $p>0$, $1\leq s$, and $I$ an ideal of $R$.  If $n\in\N$ and $r\geq \frac{1}{s}\left(\mu(I)-1-n(s-1)\right)$, then $\wsc{s}{\left(I^{n+r}\right)}\subseteq \left( I^n\right)^*$.  In particular, if $n\geq \frac{\mu(I)-1}{s-1}$, then $\wsc{s}{\left(I^n\right)}=\left(I^n\right)^*$.
\end{corollary}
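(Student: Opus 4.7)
The plan is to adapt the second case of the proof of Theorem~\ref{theorem - Briancon-Skoda}, replacing $t$ by $s$ throughout. The key arithmetic observation is that the hypothesis $r\geq\frac{1}{s}(\mu(I)-1-n(s-1))$ is equivalent to $(n+r)s\geq n+\mu(I)-1$ (multiply by $s$ and rearrange). This is precisely the numerical condition needed to push $I^{(n+r)\lceil sq\rceil}$ into a Frobenius power of $I^n$ via Lemma~\ref{lem - basic interaction}.

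First I would take $x\in\wsc{s}{(I^{n+r})}$ with witness $c\in R^\circ$ satisfying
\[cx^q\in (I^{n+r})^{(s,q)}=I^{(n+r)\lceil sq\rceil}+(I^{n+r})^{[q]}\]
for all $q\gg 0$. Using the rearranged hypothesis, $(n+r)\lceil sq\rceil\geq (n+r)sq\geq (n+\mu(I)-1)q$, so $I^{(n+r)\lceil sq\rceil}\subseteq I^{(n+\mu(I)-1)q}$. Applying Lemma~\ref{lem - basic interaction} with $h=(n+\mu(I)-1)q$ yields $h/q-\mu(I)+1=n$, so
\[I^{(n+\mu(I)-1)q}\subseteq (I^{[q]})^{n}=(I^n)^{[q]}.\]
Combined with the trivial containment $(I^{n+r})^{[q]}\subseteq (I^n)^{[q]}$, this gives $cx^q\in (I^n)^{[q]}$ for all $q\gg 0$, hence $x\in (I^n)^*$, proving the first claim.

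For the ``in particular'' statement, setting $r=0$ in the hypothesis reduces it to $0\geq\frac{1}{s}(\mu(I)-1-n(s-1))$, i.e., $n(s-1)\geq\mu(I)-1$, which is precisely $n\geq\frac{\mu(I)-1}{s-1}$. The first part then yields $\wsc{s}{(I^n)}\subseteq (I^n)^*$, and the reverse containment $(I^n)^*\subseteq \wsc{s}{(I^n)}$ holds for every ideal and every $s\geq 1$, giving equality.

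I do not expect any significant obstacle: the argument is essentially the ``$n\geq\frac{\mu(I)-1}{s-1}$'' branch of Theorem~\ref{theorem - Briancon-Skoda} with $t$ swapped for $s$, and the only nontrivial ingredient is the elementary rearrangement of the hypothesis on $r$ together with the already-proved Lemma~\ref{lem - basic interaction}.
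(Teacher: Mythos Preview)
Your proposal is correct and follows exactly the approach the paper intends: the corollary is stated as an immediate consequence of the second case of the proof of Theorem~\ref{theorem - Briancon-Skoda}, and you have correctly identified that the hypothesis on $r$ rearranges to $(n+r)s\geq n+\mu(I)-1$, after which the containment $(I^{n+r})^{(s,q)}\subseteq (I^n)^{[q]}$ follows via Lemma~\ref{lem - basic interaction} precisely as in that case. The paper gives no separate proof, so your write-up is simply an explicit unpacking of what the paper leaves implicit.
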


One way of interpreting Corollary~\ref{cor - asymptotic gives tight} is that asymptotically, as we take powers of an ideal, each $s$-closure with $s>1$ eventually collapses and becomes tight closure. In general, for smaller $s$, we must take ever higher powers of $I$ to realize this collapse; i.e., there is in general no uniform power beyond which every $s$-closure for $s>1$ is tight closure, as the following example shows.

\begin{example}
Let $I=(x^3,y^3)\subseteq k[x,y]$, where $k$ is a field of characteristic $p>0$.  By Theorem~\ref{thm - sclosure is mixed power criterion}, for any rational $s$, $\wsc{s}{(I^n)}=(I^n)^*+(I^n)_s=I^n+I_{ns}$.  Now $I_{ns}$ is generated by all monomials with degree at least $3ns$.  Thus, if $1<s<1+\frac{1}{3n}$, we have that $\deg(x^{3n-1}y^2)=3n+1=3n\left(1+\frac{1}{3n}\right)\geq 3ns$.  Therefore $x^{3n-1}y^2\in \wsc{s}{(I^n)}$, but $x^{3n-1}y^2\notin I^n = (I^n)^*$.
\end{example}

\bibliographystyle{alpha}
\bibliography{wdtbiblio}
\end{document}